\newtheorem{dummy}{Dummy}
\newtheorem{theorem}[dummy]{Theorem}
\newtheorem{proposition}[dummy]{Proposition}
\newtheorem{corollary}[dummy]{Corollary}
\theoremstyle{definition}
\newtheorem{definition}{Definition}
\newtheorem{example}[dummy]{Example}
\newtheorem{remark}[dummy]{Remark}
\newcommand{\ignore}[1]{}
\author{S. Pumpl\"un}
\email{susanne.pumpluen@nottingham.ac.uk}
\address{School of Mathematical Sciences\\
University of Nottingham\\
University Park\\
Nottingham NG7 2RD\\
United Kingdom
}
\keywords{Differential polynomial ring, skew polynomial, differential polynomial, differential operator,
 differential algebra, nonassociative algebra, right nucleus.}
\subjclass[2010]{Primary: 17A35; Secondary:  17A60, 16S36}
\begin{document}

\title[Algebras whose right nucleus is a central simple algebra]
{Algebras whose right nucleus is a central simple algebra}

\begin{abstract}
We generalize Amitsur's construction of central simple algebras over a field $F$
which are split by field extensions  possessing a derivation with field of constants $F$  to nonassociative algebras:
 for every central division algebra $D$ over a field $F$ of characteristic zero there exists an
 infinite-dimensional unital nonassociative algebra  whose right nucleus is
$D$ and whose left and middle nucleus are a field extension $K$ of $F$ splitting $D$, where $F$ is algebraically closed in
$K$.

We then give a short direct proof that every $p$-algebra of degree $m$, which has a purely inseparable splitting field
$K$ of degree $m$ and exponent one, is a differential extension of $K$ and cyclic.
We  obtain finite-dimensional division
 algebras over a field $F$ of characteristic $p>0$ whose right nucleus is a division $p$-algebra.
\end{abstract}

\maketitle

%
\section*{Introduction}
%

In 1954, Amitsur \cite{Am} observed that all associative central division algebras over a field $F$ of characteristic zero can
be constructed using differential polynomials. His construction method can be considered as an analogue
to the the well known crossed product construction,
except that he uses  splitting fields $K$ of the algebras, where the base field $F$ is algebraically closed in $K$,
instead of their algebraic splitting fields.
Some of his results also work for  $p$-algebras, i.e. over base fields of characteristic $p>0$.

In this paper, we consider  algebras which are
also obtained from differential polynomials, but which are nonassociative.

These algebras are constructed using the differential polynomial ring $K[t;\delta]$, where $K$ is a field and $\delta$
a derivation on $K$ and were defined by Petit \cite{P66}: given a differential polynomial $f\in K[t;\delta]$ of degree $m$,
the set of all differential polynomials of degree less than $m$, together with the
addition given by the usual addition of polynomials, can be equipped with a nonassociative
ring structure using right division by $f$ to define the multiplication as $g\circ h=gh \,\,{\rm mod}_r f $.
The resulting nonassociative unital ring $S_f$, also denoted by $K[t;\delta]/K[t;\delta]f$,
is an algebra over the field of constants $F={\rm Const}(\delta)$ of $\delta$. If $f$ generates
 a two-sided ideal in $K[t;\delta]$, then $S_f$ is the (associative)
quotient algebra obtained by factoring out the two-sided principal ideal generated by $f$.

If $f$ is not two-sided and $\delta$ not trivial, then the nuclei of $S_f$ are larger than the center
$F={\rm Const}(\delta)$. In that case the left and middle nucleus are always given by $K$, whereas
the right nucleus reflects both the choice of $f$ and the structure of the ring $K[t;\delta]$.

 We proceed as follows:
The basic terminology and notation we use can be found in \cite{Am} and Section \ref{sec:prel}.
Section 2 rephrases some of Amitsur's results for those algebras $S_f$ which have a central simple algebra as their right nucleus.
For this we employ Amitsur's $A$-polynomials.
In Sections 3 and 4 we show how to construct algebras $S_f$ with a given central simple algebra as right nucleus,
first for base fields of characteristic zero, then for base fields of characteristic $p>0$:
for every central simple algebra $B$ of degree $m$ over a field $F$ of characteristic zero which
is split by a field extension $K/F$ in which $F$ is algebraically closed, there exists
 an infinite-dimensional unital algebra $S_f=K[t;\delta]/K[t;\delta]f$ over $F$ with right nucleus $B$
  (and left and middle nucleus $K$),
see Theorem \ref{thm:mainI}. In particular, for every  central division algebra $D$ over $F$ there exists an
infinite-dimensional unital algebra $S_f$ over $F$ with right nucleus $D$  (Corollary \ref{cor:mainI}).

 We present a short  proof that
every $p$-algebra $B$ of degree $m$ over a field $F$ of characteristic $p$
which is split by a purely inseparable field extension $K/F$ of exponent one and degree $m$ is isomorphic to a differential
extension $(K,\delta,d_0)$ of $K$ (Theorem  \ref{thm:mammone}),
 only invoking a result on the structure of $S_f$ and Amitsur's  \cite[Lemma 20']{Am}. Thus it is cyclic by \cite[Main Theorem]{H}.

For every division $p$-algebra $D$ of degree $m$ over a field $F$ of characteristic $p$
which is split by a purely inseparable field extension $K/F$ of exponent one such that
$m<[K:F]$, there is a unital division algebra $S_f=K[t;\delta]/K[t;\delta]f$ over $F$ of dimension $m p^e$
 with right nucleus $D$ and left and middle nucleus $K$.
The smallest possible dimension $l$ of such a division algebra
containing $D$ as right nucleus is bounded via $m^2< l\leq mp^{m-1}$ and
 connected to the number of cyclic algebras that
 are needed when expressing $D$ as a product of cyclic algebras of degree $p$ in the Brauer group $Br(F)$
 (Corollary \ref{cor:last}).

%
%

\section{Preliminaries} \label{sec:prel}

\subsection{Nonassociative algebras} \label{subsec:nonassalgs}


Let $F$ be a field and let $A$ be an $F$-vector space. $A$ is an
\emph{algebra} over $F$ if there exists an $F$-bilinear map $A\times
A\to A$, $(x,y) \mapsto x \cdot y$, denoted simply by juxtaposition
$xy$, the  \emph{multiplication} of $A$. An algebra $A$ is called
\emph{unital} if there is an element in $A$, denoted by 1, such that
$1x=x1=x$ for all $x\in A$. We will only consider unital algebras
from now on without explicitly saying so.

An algebra $A\not=0$ is called a \emph{division algebra} if for any
$a\in A$, $a\not=0$, the left multiplication  with $a$, $L_a(x)=ax$,
and the right multiplication with $a$, $R_a(x)=xa$, are bijective.
If $A$ has finite dimension over $F$, $A$ is a division algebra if
and only if $A$ has no zero divisors \cite[pp. 15, 16]{Sch}.

Associativity in $A$ is measured by the {\it associator} $[x, y, z] =
(xy) z - x (yz)$. The {\it left nucleus} of $A$ is defined as ${\rm
Nuc}_l(A) = \{ x \in A \, \vert \, [x, A, A]  = 0 \}$, the {\it
middle nucleus} of $A$ is ${\rm Nuc}_m(A) = \{ x \in A \, \vert \, [A, x, A]  = 0 \}$ and  the {\it right nucleus} of $A$ as
${\rm Nuc}_r(A) = \{ x \in A \, \vert \, [A,A, x]  = 0 \}$. ${\rm Nuc}_l(A)$, ${\rm Nuc}_m(A)$, and ${\rm Nuc}_r(A)$ are associative
subalgebras of $A$. Their intersection
 ${\rm Nuc}(A) = \{ x \in A \, \vert \, [x, A, A] = [A, x, A] = [A,A, x] = 0 \}$ is the {\it nucleus} of $A$.
${\rm Nuc}(A)$ is an associative subalgebra of $A$ containing $F1$
and $x(yz) = (xy) z$ whenever one of the elements $x, y, z$ is in
${\rm Nuc}(A)$. The
 {\it center} of $A$ is ${\rm C}(A)=\{x\in \text{Nuc}(A)\,|\, xy=yx \text{ for all }y\in A\}$.


\subsection{Differential polynomial rings}

Let $K$ be a field and $\delta:K\rightarrow K$ a \emph{derivation}, i.e. an
additive map such that $$\delta(ab)=a\delta(b)+\delta(a)b$$ for all $a,b\in K$.
 The \emph{differential polynomial ring} $K[t;\delta]$
is the set of polynomials $$a_0+a_1t+\dots +a_nt^n$$ with $a_i\in K$,
where addition is defined term-wise and multiplication by
$$ta=at+\delta(a) \quad (a\in K).$$
For $f=a_0+a_1t+\dots +a_nt^n$ with $a_n\not=0$ define ${\rm
deg}(f)=n$ and ${\rm deg}(0)=-\infty$. Then ${\rm deg}(fg)={\rm deg}
(f)+{\rm deg}(g).$
 An element $f\in R$ is \emph{irreducible} in $R$ if it is not a unit and if it has no proper factors,
 i.e if there do not exist $g,h\in R$ with
 ${\rm deg}(g),{\rm deg} (h)<{\rm deg}(f)$ such
 that $f=gh$.

 $R=K[t;\delta]$ is a left and right principal ideal domain and there is a right division algorithm in $R$: for all
$g,f\in R$, $g\not=0$, there exist unique $r,q\in R$ with ${\rm
deg}(r)<{\rm deg}(f)$, such that $g=qf+r.$ There is also a left division algorithm in $R$ \cite[p.~3 and Prop. 1.1.14]{J96}. (Our
terminology is the one used by Petit \cite{P66};  Jacobson's is vice versa.)

 Two non-zero elements $f,g\in R$ are called \emph{similar}
 ($f\thicksim g$) if and only if there exist $h,q,u\in R$ such that
 $$1=hf+qg \text{ and } u'f=gu$$
for some $u'\in R$. Equivalently, $f$ and $g$ are similar if $R/Rf$ and $R/Rg$ are isomorphic as $R$-modules \cite[p.~11]{J96}.
Obviously, $f\thicksim g$ implies that ${\rm deg}(f)={\rm deg} (g)$.

\subsection{The characteristic $p>0$ case}

Let $K$ be a field of characteristic $p$ and $R=K[t;\delta]$, then
$$(t-b)^p=t^p-V_p(b), \quad V_p(b)=b^p+\delta^{p-1}(b),\quad (t-b)^{p^e}=t^{p^e}-V_{p^e}(b)$$
 for all $b\in K$
 with $V_{p^e}(b)=V_p^e(b)=V_p(\dots (V_p(b))\dots )$ \cite[p.~17ff]{J96}.
For any $p$-polynomial
$$f(t)=a_0t^{p^e}+a_1t^{p^{e-1}}+\dots+a_et+d\in D[t;\delta]$$
we thus have
$$f(t)-f(t-b)=a_0V_{p^e}(b)+a_1V_{p^{e-1}}(b)+\dots+a_eb$$
for all $b\in K$ and define
$$V_f(b)=a_0V_{p^e}(b)+a_1V_{p^{e-1}}(b)+\dots+a_eb.$$

\subsection{Nonassociative algebras obtained from differential polynomial rings} \label{subsec:structure}

Let $K$ be a field and $f\in R=K[t;\delta]$ of degree $m$. Let ${\rm mod}_r f$ denote the remainder of right division by $f$.
Define  $F={\rm Cent}(\delta)=\{a\in K\,|\, \delta(a)=0\}$.

 \begin{definition} (cf. \cite[(7)]{P66})
  The vector space
$$R_m=\{g\in K[t;\delta]\,|\, {\rm deg}(g)<m\}$$
 together with the multiplication
 $$g\circ h=gh \,\,{\rm mod}_r f $$
 is a unital nonassociative algebra $S_f=(R_m,\circ)$ over
 $$F_0=\{a\in K\,|\, ah=ha \text{ for all } h\in S_f\}.$$
\end{definition}

 $F_0$  is a subfield of $K$ \cite[(7)]{P66} and it is easy to check that $F_0={\rm Cent}(\delta)$.
 The algebra $S_f$ is also denoted by $R/Rf$ \cite{P66, P68}
 if we want to make clear which ring $R$ is involved in the construction.
 In the following, we call the algebras $S_f$  \emph{Petit algebras} and denote their multiplication simply
 by juxtaposition. Without loss of generality, we may assume that $f$ is monic, since $S_f= S_g$ for all
 $g=af$ with $a\in K^\times$.

  Using left division by $f$ and
  the remainder ${\rm mod}_l f$ of left division by $f$ instead, we can define the multiplication for
  another unital nonassociative algebra on $R_m$ over $F$, called $\,_fS$ or $R/fR$.
We will only consider the Petit algebras $S_f$, however, since every algebra
$\,_fS$ is the opposite algebra of some Petit algebra (cf. \cite[(1)]{P66}).

 Right multiplication with $0\not=g\in S_f$
is given by $R_g:S_f\longrightarrow S_f,$ $h\mapsto hg$, and
is a left $K$-module endomorphism. Left multiplication
$L_g:S_f\longrightarrow S_f,$ $h\mapsto gh$  is an $F$-module endomorphism \cite{P66}, and if we view $S_f$ as a
right module over ${\rm Nuc}_r(S_f)$, a right ${\rm Nuc}_r(S_f)$-module endomorphism.

Clearly $S_f$ has no zero divisors if and only if $R_g$ and $L_g$ are injective.

\begin{theorem} (cf. \cite[(2), p.~13-03, (5), (6), (7), (9), (14)]{P66})
\label{thm:structure}
Let $f
\in R = K[t;\delta]$.
\\
(i) If $S_f$ is not associative then ${\rm Nuc}_l(S_f)={\rm Nuc}_m(S_f)=K$ and
$${\rm Nuc}_r(S_f)=\{g\in R_m\,|\, fg\in Rf\}.$$
The right nucleus of $S_f$ is Amitsur's invariant ring of $f$.
\\ (ii) The powers of $t$ are associative if and only if $t^mt=tt^m$
 if and only if $t\in {\rm Nuc}_r(S_f)$ if and only if $ft\in Rf.$
\\ (iii) If $f$ is irreducible then ${\rm Nuc}_r(S_f)$ is an associative division algebra.
\\ (iv) Let $f\in R$ be irreducible and $S_f$ a finite-dimensional $F$-vector space
or free of finite rank as a right ${\rm Nuc}_r(S_f)$-module. Then $S_f$
is a division algebra.
\\ Conversely, if $S_f$ is a division algebra then $f$ is irreducible.
\\
(v) $S_f$ is associative if and only if $f$ is a two-sided element (i.e., generates a two-sided ideal $Rf$).
In that case, $S_f$ is the usual quotient algebra $K[t;\delta]/(f)$.
\\ (vi)  $f$ is irreducible if and only if $S_f$ is a right division algebra over $F$
(i.e., each non-zero element in $S_f$ has a left inverse: there is $z\in S_f$ such that $zh=1$), if and only if
 $S_f$ has no zero divisors.
\end{theorem}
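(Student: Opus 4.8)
The plan is to base the whole theorem on a single computation of the associator in $S_f$. Write $g\circ h=gh\,{\rm mod}_r f$ for the multiplication of $S_f$, and for $g,h\in R_m$ let $q_{g,h}\in R$ be the quotient obtained on right division of $gh\in R$ by $f$, so that $gh=q_{g,h}f+(g\circ h)$ in $R$. Then I would first prove the identity
\[
[g,h,k]_{S_f}=(g\circ h)\circ k-g\circ(h\circ k)=-\bigl(q_{g,h}\,fk\bigr)\,{\rm mod}_r f\qquad(g,h,k\in R_m),
\]
by expanding $(g\circ h)k=ghk-q_{g,h}fk$ and $g(h\circ k)=ghk-g\,q_{h,k}f$ in $R$, reducing on the right by $f$, and using that ${\rm mod}_r f$ is additive and vanishes on $Rf$ (so the two ``$ghk\,{\rm mod}_r f$'' terms cancel). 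Parts (i), (ii), (v) will then be corollaries of this formula, while (iii), (iv), (vi) rest on module-theoretic properties of the left and right principal ideal domain $R$.

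For (i): if $g$ or $h$ lies in $K$ then $q_{g,h}=0$ (as then $\deg(gh)<m$), so $K\subseteq{\rm Nuc}_l(S_f)\cap{\rm Nuc}_m(S_f)$; and since $q_{t^{m-1},t}=1$ (as $f$ is monic), the identity shows $k\in{\rm Nuc}_r(S_f)\iff fk\in Rf$, i.e.\ ${\rm Nuc}_r(S_f)$ is Amitsur's invariant ring of $f$. If $S_f$ is not associative the identity produces a $k_0\in R_m$ with $fk_0\notin Rf$; then for $g$ of any degree $d\in\{1,\dots,m-1\}$ the choice $h=t^{m-d}$ makes $q_{g,h}$ the nonzero leading coefficient of $g$, so $[g,h,k_0]\neq0$ and $g\notin{\rm Nuc}_l(S_f)$; hence ${\rm Nuc}_l(S_f)=K$, and the same leading-coefficient computation with $t^{m-d}g$ in place of $gt^{m-d}$ gives ${\rm Nuc}_m(S_f)=K$. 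For (ii): $t\in{\rm Nuc}_r(S_f)\iff ft\in Rf$ is the case $k=t$ of the description above; the element ``$t^m$'' of $S_f$ is $t^m-f$, and a direct reduction gives $t^m\circ t-t\circ t^m=-(ft)\,{\rm mod}_r f$, so $t^mt=tt^m\iff ft\in Rf$; finally, since ${\rm Nuc}_r(S_f)$ is a subalgebra, $t\in{\rm Nuc}_r(S_f)$ forces every power of $t$ to associate, while $t^mt=tt^m$ is itself an instance of associativity of powers of $t$.

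For (iii) I would identify $({\rm Nuc}_r(S_f),\circ)$ with the eigenring $E(f)=\{g+Rf\mid fg\in Rf\}$ and note that $\bar g\mapsto(\bar x\mapsto\overline{xg})$ is a ring anti-isomorphism of $E(f)$ onto ${\rm End}_R(R/Rf)$; if $f$ is irreducible then $Rf$ is a maximal left ideal, $R/Rf$ is a simple left $R$-module, and Schur's Lemma makes ${\rm End}_R(R/Rf)$ a division ring, hence so is its opposite $E(f)={\rm Nuc}_r(S_f)$ (which is associative, being a nucleus). For (v): if $f$ is two-sided then $Rf=fR$ is a two-sided ideal and $S_f$ is exactly the quotient ring $K[t;\delta]/(f)$, hence associative; conversely, associativity forces $q_{g,h}fk\in Rf$ for all $g,h,k\in R_m$, and specializing $q_{g,h}=1$ gives $fR_m\subseteq Rf$, whence $fR\subseteq Rf$ by an easy induction using $t\in R_m$, so $Rf$ is two-sided (this needs $\deg f\geq2$; for $\deg f=1$, $S_f=K$).

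For (iv) and (vi), assume $f$ is irreducible. If $g\circ h=0$ with $g,h\neq0$, then $gh\in Rf\cap Rh=Rm_0$ with $m_0$ the least common left multiple of $f$ and $h$; since $f$ is irreducible and $0\ne h$ has degree $<\deg f$, the greatest common right divisor of $f$ and $h$ is a unit (it right-divides $f$ but cannot be associate to $f$), so $\deg m_0=\deg f+\deg h$, and $0\neq gh\in Rm_0$ then forces $\deg g\geq\deg f$, a contradiction; hence $L_g$ is injective for $g\neq0$, and the symmetric argument gives $R_g$ injective, so $S_f$ has no zero divisors. A proper factorization $f=f_1f_2$ yields $f_1\circ f_2=0$, so ``no zero divisors'' $\iff$ ``$f$ irreducible''. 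To pass to a division algebra in (iv): if $S_f$ is finite-dimensional over $F$ then the injective $F$-linear maps $L_g,R_g$ are bijective; if $S_f$ is free of finite rank over $N:={\rm Nuc}_r(S_f)$, which is a division ring by (iii), then $R_g$ is a right $N$-module map and so is $L_g$ (because $g\circ(h\circ n)=(g\circ h)\circ n$ for $n\in N$), so again injective $\Rightarrow$ bijective; either way $S_f$ is a division algebra. For (vi), $S_f$ is always finite-dimensional over $K$ and $R_h$ is left $K$-linear, so $f$ irreducible $\Rightarrow R_h$ injective $\Rightarrow R_h$ bijective $\Rightarrow 1=z\circ h$ for some $z$, i.e.\ $S_f$ is a right division algebra; and if $S_f$ is a right division algebra then a proper right factor $f_2$ of $f$ would have a left inverse $z$ in $S_f$, i.e.\ $(z-qf_1)f_2=1$ for a suitable $q\in R$, which is impossible in the domain $R$ since $\deg f_2\geq1$ — so $f$ is irreducible, closing the equivalences. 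The step that needs genuine care is the division-algebra conclusion in (iv): carrying out the degree count through the least common left multiple correctly, and observing that $L_g$ (not only the manifestly $N$-linear $R_g$) is a right ${\rm Nuc}_r(S_f)$-module endomorphism, which is precisely what turns injectivity into bijectivity under the finiteness hypothesis.
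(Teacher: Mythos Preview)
Your associator identity $[g,h,k]=-\bigl(q_{g,h}\,fk\bigr)\,{\rm mod}_r f$ is correct and is exactly the right engine for (i), (ii), (v); your arguments there are clean and match the content of Petit's results that the paper cites without proof. Your treatment of (iii) via the eigenring and Schur's Lemma, and of (vi) via the lclm/gcrd degree count, are also sound.

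There is one genuine slip in (iv). You write that ``$R_g$ is a right $N$-module map'' for $N={\rm Nuc}_r(S_f)$. This is false in general: $R_g(h\circ n)=(h\circ n)\circ g$, while $R_g(h)\circ n=(h\circ g)\circ n=h\circ(g\circ n)$, and there is no reason these should agree merely from $n\in{\rm Nuc}_r(S_f)$. Fortunately you do not need it. As you yourself observe in (vi), $R_g$ is \emph{left $K$-linear} (because $K\subset{\rm Nuc}_l(S_f)$) and $\dim_K S_f=m<\infty$, so once $f$ is irreducible $R_g$ is injective and therefore bijective, unconditionally. The finiteness hypothesis in (iv) is needed only to upgrade the injective $L_g$ to a bijection; your observation that $L_g$ is right $N$-linear (since $g\circ(h\circ n)=(g\circ h)\circ n$) is correct and does exactly that job when $S_f$ is free of finite rank over the division ring $N$. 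This is also how the paper uses the hypothesis: in the preliminaries it records that $R_g$ is a left $K$-module map while $L_g$ is a right ${\rm Nuc}_r(S_f)$-module map, and in the proof of Theorem~\ref{thm:bounded} it invokes precisely the $N$-linearity of $L_h$ (not of $R_h$) to pass from injectivity to surjectivity. So replace your appeal to ``$R_g$ right $N$-linear'' by the $K$-linearity argument you already have in (vi), and (iv) goes through.

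The paper itself gives no proof of this theorem, deferring entirely to Petit~\cite{P66}; your self-contained derivation from the single associator formula is a nice complement to that.
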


Recall that a polynomial $f\in R= K[t;\delta]$ is \emph{bounded} if there exists $0\not=f^*\in R$, such that $Rf^*=f^*R$
is the largest two-sided ideal of $R$ contained in $Rf$.

 If $f\in R$ is bounded then $f$ is irreducible if and only if
${\rm Nuc}_r(S_f)$ has no zero divisors if and only if ${\rm Nuc}_r(S_f)$ is an associative division algebra
(cf. \cite[Proposition 4]{G} which sums up classical results from \cite{J43}).
\cite[Theorem 4]{C} yields:

\begin{theorem} \label{thm:bounded}
Let  $f\in R$ be irreducible. Then $f$ is bounded if and only if
$S_f$ is free of finite rank as a $ {\rm Nuc}_r(S_f)$-module.
In this case, $S_f$ is a division algebra.
\end{theorem}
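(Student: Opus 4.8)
The plan is to realize $S_f$, via the left regular representation of $R=K[t;\delta]$, as a faithful simple module over the quotient $R/\ker\lambda$, and then to turn the condition ``$f$ is bounded'' into ``$R/\ker\lambda$ is a finite-dimensional $K$-algebra, hence artinian'' and apply the Wedderburn and Jacobson density theorems; this is exactly the pattern of the classical theory of bounded pseudo-linear transformations. Concretely, write $E={\rm Nuc}_r(S_f)$; since $f$ is irreducible, $E$ is an associative division ring (Schur's lemma, cf.\ Theorem~\ref{thm:structure}(iii)) and it is Amitsur's invariant ring of $f$, so $M:=R/Rf\cong S_f$ carries a natural right $E$-module structure commuting with left multiplication by $R$ (see Section~\ref{subsec:structure}). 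Define $\lambda\colon R\to{\rm End}_F(S_f)$ by $\lambda(a)(h)=ah\,{\rm mod}_r f$. Since $Rf$ is a left ideal, the remainder map ${\rm mod}_r f$ is additive and annihilates $Rf$, and one checks directly that $\lambda$ is a unital ring homomorphism whose image consists of right $E$-linear endomorphisms (indeed $\lambda(a)(h\cdot e)=ahe\,{\rm mod}_r f=\lambda(a)(h)\cdot e$). Moreover $\ker\lambda=\{a\in R\mid aR\subseteq Rf\}={\rm Ann}_R(R/Rf)$, and a short argument shows this coincides with the largest two-sided ideal of $R$ contained in $Rf$; hence $f$ is bounded if and only if $\ker\lambda\neq 0$, the bound $f^{*}$ generating $\ker\lambda=Rf^{*}=f^{*}R$.

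For the implication ``$S_f$ free of finite rank over $E$ $\Rightarrow$ $f$ bounded'': in this case $M=R/Rf$ is finite-dimensional over the division ring $E={\rm End}_R(M)$, so by the Jacobson density theorem $R/{\rm Ann}_R(M)\cong{\rm End}_E(M)\cong M_n(E^{\mathrm{op}})$ is simple artinian, finite-dimensional over $E$. If ${\rm Ann}_R(M)$ were $0$, then $R$ itself would be isomorphic to $M_n(E^{\mathrm{op}})$; but $R=K[t;\delta]$ is a domain which is not a division ring (for instance $t$ is not invertible), which rules this out. Therefore $\ker\lambda={\rm Ann}_R(M)\neq 0$, i.e.\ $f$ is bounded.

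For the converse ``$f$ bounded $\Rightarrow$ $S_f$ free of finite rank over $E$, and then a division algebra'': let $f^{*}$ be the bound and $d=\deg f^{*}\geq 1$. Right division by $f^{*}$ exhibits $R/Rf^{*}$ as a left $K$-vector space of dimension $d$, so the ring $\bar R:=R/Rf^{*}=R/\ker\lambda$ has DCC on left ideals and is left artinian. It is also left primitive, because $M=R/Rf$ is a faithful (since ${\rm Ann}_R(M)=Rf^{*}$) simple (since $f$ is irreducible) $\bar R$-module; a left primitive left artinian ring is simple artinian, so $\bar R\cong M_n(\Delta)$ for a division ring $\Delta$. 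Then $M$ is the unique simple $\bar R$-module, $M\cong\Delta^{\,n}$, which is free of finite rank $n$ over its endomorphism ring; identifying ${\rm End}_{\bar R}(M)={\rm End}_R(M)$ with $E={\rm Nuc}_r(S_f)$, we conclude $S_f$ is free of finite rank as a ${\rm Nuc}_r(S_f)$-module. That $S_f$ is then a division algebra is immediate from Theorem~\ref{thm:structure}(iv).

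I expect the main obstacle to be the first paragraph: verifying cleanly that $\lambda$ is a ring homomorphism with image in ${\rm End}_E(S_f)$, and above all that $\ker\lambda$ is simultaneously ${\rm Ann}_R(R/Rf)$ and the largest two-sided ideal contained in $Rf$. This identification is precisely what lets ``$f$ bounded'' be rephrased as ``$R/\ker\lambda$ is a finite-dimensional $K$-algebra'', feeding the Wedderburn argument, and conversely what lets the density argument produce a nonzero bound; the density/Wedderburn steps and the final division-algebra conclusion are then routine or quotable (from Jacobson's theory and Theorem~\ref{thm:structure}).
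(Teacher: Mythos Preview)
Your argument is correct. The paper, however, takes a much shorter route: for the equivalence ``$f$ bounded $\Leftrightarrow$ $S_f$ free of finite rank over ${\rm Nuc}_r(S_f)$'' it simply invokes \cite[Theorem~4]{C} (Carcanague) as a black box, without reproving it. What you have written---realising $S_f=R/Rf$ as a simple left $R$-module, identifying $\ker\lambda={\rm Ann}_R(R/Rf)$ with the largest two-sided ideal inside $Rf$, and then running Jacobson density in one direction and Wedderburn--Artin on the artinian quotient $R/Rf^{*}$ in the other---is essentially a self-contained proof of Carcanague's theorem along the classical lines of Jacobson's theory of bounded elements. For the final clause the roles are reversed: you quote Theorem~\ref{thm:structure}(iv), whereas the paper spells the argument out directly (irreducibility of $f$ makes each $L_h$ injective, and an injective right ${\rm Nuc}_r(S_f)$-linear endomorphism of a module that is free of finite rank over the division ring ${\rm Nuc}_r(S_f)$ is automatically bijective). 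One cosmetic remark: the map $g\mapsto R_g$ identifies ${\rm Nuc}_r(S_f)$ with ${\rm End}_R(R/Rf)^{\mathrm{op}}$ rather than ${\rm End}_R(R/Rf)$, so in your notation $\Delta\cong E^{\mathrm{op}}$; since only finiteness of rank over a division ring is used, this does not affect the conclusion.
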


\begin{proof}
The first part of the statement is \cite[Theorem 4]{C}.
Since $f$ irreducible, $S_f$ is a right division algebra and $L_h$ is injective for all  $h\in S_f$,
$h\not=0$, as observed in \cite[Section 2., (7)]{P66}.  The second part then follows from the fact that $S_f$ is free of
finite rank as a $ {\rm Nuc}_r(S_f)$-module,
which means the injective  $ {\rm Nuc}_r(S_f)$-linear map $L_h$ is also surjective.
\end{proof}

$R= K[t;\delta]$ has finite rank over its center if and only if
$K$ is of finite rank over $C_t=\{a\in K\,|\, at=ta\}$ if and only if all polynomials of $R$ are bounded and
if for all $f$ of degree non-zero, ${\rm deg}(f^*)/{\rm deg}(f)$ is bounded in $\mathbb{Q}$ ($f^*$ being the bound of
$f$) \cite[Theorem IV]{CI}. Since here $C_t= {\rm Const}(\delta)=F$, we conclude:

\begin{proposition} \label{prop:important}
Assume that one of the two following equivalent conditions hold:
\\ (i) $R=K[t;\delta]$ has finite rank over its center;
\\ (ii) $K/F$ is a finite field extension.
\\ Then every $f\in R$ is bounded. In particular, if $f$ is irreducible then $S_f$ is a division algebra.
\end{proposition}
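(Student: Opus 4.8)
The plan is to read both assertions directly off \cite[Theorem IV]{CI} and Theorem \ref{thm:bounded}, the only genuine computation being the identification of the centralizer $C_t=\{a\in K\,|\, at=ta\}$ of $t$ in $K$ with $F$. For the equivalence of (i) and (ii): by \cite[Theorem IV]{CI}, $R=K[t;\delta]$ has finite rank over its center if and only if $K$ has finite rank over $C_t$. From the commutation rule $ta=at+\delta(a)$ one sees that $at=ta$ holds exactly when $\delta(a)=0$, so $C_t={\rm Cent}(\delta)=F$. Hence (i) is equivalent to $[K:F]<\infty$, which is (ii).

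Next, assuming (i), the boundedness of every $f\in R$ is part of the very characterization recalled just before the statement: by \cite[Theorem IV]{CI}, finite rank of $R$ over its center is equivalent to all polynomials of $R$ being bounded together with $\deg(f^*)/\deg(f)$ staying bounded in $\mathbb{Q}$ as $f$ ranges over elements of nonzero degree. In particular every $f\in R$ is bounded.

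Finally, suppose $f\in R$ is irreducible. Being bounded, it falls under the hypotheses of Theorem \ref{thm:bounded}, which yields that $S_f$ is free of finite rank as a ${\rm Nuc}_r(S_f)$-module and, consequently, that $S_f$ is a division algebra. There is no real obstacle in this argument, since the substance is entirely contained in the quoted results of Cohn--Pierce \cite{CI} and Carcanague \cite{C}; the single point that requires (a one-line) care is the identification $C_t=F$, which uses that $\delta$ is a derivation rather than merely an additive map.
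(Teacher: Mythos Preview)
Your argument is correct and follows exactly the route taken in the paper: the equivalence of (i) and (ii) and the boundedness of every $f$ are read off \cite[Theorem IV]{CI} together with the identification $C_t={\rm Const}(\delta)=F$, and the division algebra conclusion is Theorem~\ref{thm:bounded}. One minor bibliographic slip: \cite{CI} is Carcanague, not Cohn--Pierce.
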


Note that if $K/F$ is a finite field extension then the derivation $\delta$ is trivial, or $K$ has characteristic $p>0$.

We will assume throughout the paper that $f\in K[t;\delta]$ has $\text{deg}(f) = m \geq
2$ (if $f$ has degree $m=1$ then $S_f\cong K$) and that
$\delta\not=0$. Without loss of generality, we could only  look at monic $f$, but will do so only when
explicitly mentioned.

%
%

\section{Nonassociative algebras whose right nucleus is a central simple algebra}

 We use the terminology from \cite{Am} with the only exception that that in our definition
 of $K[t;\delta]$, we look at polynomials with the coefficients
 written on the left, not on the right-hand-side as in \cite{Am}. All results, however, work analogously in this case.

  By \cite[Theorem 4.2]{N}, given a field extension $K/F$ in characteristic zero,
 $F$ is the field of constants of a derivation of $K$ if and only if $F$ is algebraically closed in $K$.

 In this section, let $K$ be a field of characteristic 0.
 Let $\delta$ be a derivation of $K$ with $F={\rm Const}(\delta)$ and
  $f\in R=K[t;\delta]$. The finite-dimensional
 associative $F$-algebra ${\rm Nuc}_r(S_f)$ is called the \emph{invariant ring} of $f$ by Amitsur  \cite[p.~260]{Am},
  in recent literature it is also referred to as the \emph{eigenspace} of $f$.

Let $V$ be an $K$-vector space. An additive map $T:V\longrightarrow V$, such that
$T(\alpha v)=\alpha T(v)+\delta(\alpha)v$
for all $v\in V$ and $\alpha\in K$, is called a \emph{pseudo-linear transformation} on $V$.
Given a basis of $V$, a pseudo-linear transformation $T$ on $V$ is given by a matrix. Moreover,
$(V,T)$ is isomorphic to $K[t;\delta]/f(t)K[t;\delta]$ for some $f(t)\in K[t;\delta]$ which is called the \emph{characteristic polynomial}
of $T$ \cite[p.~250]{Am}. The characteristic polynomial is uniquely determined up to similarity
and any polynomial $f(t)$ is the characteristic polynomial of some pseudo-linear transformation $(V,T)$
(simply define $V=K[t;\delta]/K[t;\delta]f(t)$ and $T(p(t)+K[t;\delta]f(t))=t p(t)+K[t;\delta]f(t)$).

Let $(V,T)$ and $(V',T')$ be two pseudo-linear transformations with characteristic polynomials  $f, g\in K[t;\delta]$
where ${\rm deg}(f)=m$ and
 ${\rm deg}(g)=n$. Then there is a pseudo-linear transformation $T\times T'$ on the tensor product $V\otimes V'$ defined via
 $$(T\times T')(u)=\sum_i T(v_i) \otimes w_i + \sum_i v_i\otimes T'(w_i)$$
  for all $u=\sum_i v_i\otimes w_i\in V\otimes V'$.

  Furthermore, let $f,g\in K[t;\delta]$ where ${\rm deg}(f)=m$ and
 ${\rm deg}(g)=n$, and $T$ and $T'$ be the pseudo-linear transformation defined using $f$ and $g$.
 Then the \emph{resultant} $f\times g$ of $f$ and $g$ is any characteristic polynomial of
$T\times T'$,
so that $f\times g$
is a polynomial of degree $nm$
uniquely determined up to similarity \cite[p.~255]{Am}.

 A differential polynomial $f\in K[t;\delta]$ of degree $m$ is called an \emph{$A$-polynomial} if there is some
 $\widetilde{f}\in K[t;\delta]$ of degree $n$ such that the resultant $f\times  \widetilde{f}$ is similar to
 $e_{mn}$,  the characteristic polynomial of the pseudo-linear transformation corresponding to the zero  $mn\times mn$
 matrix \cite[p.~263]{Am}.

 Amitsur's results  tell us when ${\rm Nuc}_r(S_f)$ is a central simple algebra:

\begin{theorem} \cite[Lemma  17, 18, 19, Theorem 17, Corollary, Lemma  22]{Am}\label{thm:general}
Let  $f, g\in K[t;\delta]$ with ${\rm deg}(f)=m\geq 2$ and
 ${\rm deg}(g)=n\geq 2$.
\\ (i)  ${\rm Nuc}_r(S_f)$ has dimension $m^2$ if and only if $f$ is an $A$-polynomial.
\\ (ii)  If $f$ is an $A$-polynomial then ${\rm Nuc}_r(S_f)$ is a central simple algebra of degree
$m$ which is split by
$K$.
\\ (iii) If $f$ and $g$ are $A$-polynomials then so is $h=f\times g$ and
$${\rm Nuc}_r(S_h)={\rm Nuc}_r(S_f)\otimes_F {\rm Nuc}_r(S_g).$$
 (iv) If $f$ and $g$ are $A$-polynomials then
$${\rm Nuc}_r(S_f)\cong {\rm Nuc}_r(S_g)$$
 if and only if $f\sim g(t+a)\sim g(t)\times t+a$ for some $a\in K$. In particular,
 $$S_f\cong S_g \text{ implies that }f\sim g(t+a)\sim g(t)\times t+a$$
  for some $a\in K$.
\\ (v) Suppose $f$ is an $A$-polynomial. Then
$${\rm Nuc}_r(S_f)\cong {\rm Mat}_m(F)$$
if and only if one of the following holds:
\begin{itemize}
\item $f\sim e_m\times t+c$ for some $c\in K$;
\item $f$ decomposes into irreducible factors and at least one factor is linear of the form $t+c$ for some $c\in K$
(then $f\sim e_m\times t+c$).
\\ In particular, then the irreducible factors of $f$  are all similar to $t+c$.
\end{itemize}
\end{theorem}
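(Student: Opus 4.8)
The plan is to obtain all six statements by transcribing the cited results of Amitsur into our setting, the single bridge being Theorem~\ref{thm:structure}(i), which identifies ${\rm Nuc}_r(S_f)$ with the invariant ring of $f$ in the sense of \cite{Am} whenever $S_f$ is not associative. Two observations make this legitimate throughout. Since we work in characteristic $0$ with $\delta\neq0$, the extension $K/F$ is infinite, so a two-sided $f$ of finite degree would yield the associative algebra $S_f=K[t;\delta]/(f)$ whose right nucleus is all of $S_f$, hence of infinite $F$-dimension; therefore an $A$-polynomial of finite degree is never two-sided, $S_f$ is not associative, and Theorem~\ref{thm:structure}(i) applies in (ii)--(v), while in (i) the direction assuming $\dim_F{\rm Nuc}_r(S_f)=m^2$ likewise forces $S_f$ non-associative. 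Secondly, our convention of writing the coefficients of $K[t;\delta]$ on the left is opposite to Amitsur's, but, as noted before the theorem, each of his statements transfers verbatim after interchanging left and right modules (and $fR$ with $Rf$); I would record this transfer once at the outset. Granting this, part (i) is precisely Amitsur's computation \cite[Lemmas~17, 18]{Am} that the invariant ring of an $f$ of degree $m$ has $F$-dimension at most $m^2$, with equality exactly for $A$-polynomials.

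For (ii), Amitsur's Theorem~17 and its Corollary give that the invariant ring of an $A$-polynomial $f$ is a central simple $F$-algebra of degree $m$. That $K$ splits it I would see directly: right multiplication by an element $g$ of the right nucleus is a $K$-linear endomorphism $R_g$ of $S_f$, injective since $R_g(1)=g$, and $g\mapsto R_g$ is an anti-homomorphism on ${\rm Nuc}_r(S_f)$ because $[x,g,h]=0$ whenever $h\in{\rm Nuc}_r(S_f)$; hence ${\rm Nuc}_r(S_f)^{\rm op}$ embeds in ${\rm End}_K(S_f)\cong{\rm Mat}_m(K)$, and comparing $K$-dimensions yields ${\rm Nuc}_r(S_f)\otimes_F K\cong{\rm Mat}_m(K)$. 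For (iii), the content is that the resultant realizes the tensor product on invariant rings: $f\times g$ is again an $A$-polynomial and ${\rm Nuc}_r(S_{f\times g})\cong{\rm Nuc}_r(S_f)\otimes_F{\rm Nuc}_r(S_g)$, because the pseudo-linear transformation $T\times T'$ on $V\otimes V'$ has invariant ring the tensor product of those of $T$ and $T'$ (an additive endomorphism commuting with $T\times T'$ decomposes as a sum of tensors of endomorphisms commuting with $T$ and with $T'$); this is Amitsur's Lemma~19 together with the Corollary to Theorem~17.

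Part (iv) restates Amitsur's Lemma~22: for $A$-polynomials $f,g$ the invariant rings are isomorphic iff $f\sim g(t+a)$ (equivalently $f\sim g(t)\times t+a$) for some $a\in K$. For the supplementary assertion, an $F$-algebra isomorphism $S_f\cong S_g$ carries ${\rm Nuc}_r(S_f)$ onto ${\rm Nuc}_r(S_g)$, the right nucleus being defined intrinsically by the vanishing of associators; by (ii) these nuclei are central simple of degrees ${\rm deg}(f)$ and ${\rm deg}(g)$, so ${\rm deg}(f)={\rm deg}(g)$, and the first part of (iv) gives the asserted similarity.

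Part (v) is the statement I expect to require the most care. First one records that $e_m$ is an $A$-polynomial with ${\rm Nuc}_r(S_{e_m})\cong{\rm Mat}_m(F)$: indeed $e_m\times t\sim e_m$, and for a monic representative $R/Re_m$ is a direct sum of $m$ copies of the simple module $R/Rt$, whose $F$-algebra of $R$-module endomorphisms is $F$, so the invariant ring of $e_m$ is ${\rm Mat}_m(F)$. Applying (iv) with $g=e_m$ then yields ${\rm Nuc}_r(S_f)\cong{\rm Mat}_m(F)$ iff $f\sim e_m\times t+c$ for some $c\in K$, which is the equivalence with the first bullet. For the equivalence with the second bullet, I would use the structure of $R/Rf$ for an $A$-polynomial: since $\dim_F{\rm Nuc}_r(S_f)=m^2$ is the maximal value permitted by (i), the composition factors of the $R$-module $R/Rf$ must be pairwise similar (two non-similar composition factors would push the invariant ring below dimension $m^2$, by the estimate underlying Amitsur's Lemma~17). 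Hence, if one irreducible factor of $f$ is linear, say $t+c$, then $R/R(t+c)$ is a composition factor of $R/Rf$, every irreducible factor of $f$ is similar to $t+c$ (this is the final ``in particular''), $R/Rf\cong(R/R(t+c))^m$, and therefore $f\sim e_m\times t+c$, returning us to the first bullet; conversely $e_m\times t+c$ visibly has a linear factor similar to $t+c$. The main obstacle is making the ``all composition factors are similar'' step precise and checking its stability under the change of conventions; everything else is bookkeeping around the cited lemmas of Amitsur.
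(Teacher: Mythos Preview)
The paper gives no proof of this theorem at all: it is stated purely as a transcription of the cited results of Amitsur, the only bridge being the identification of ${\rm Nuc}_r(S_f)$ with Amitsur's invariant ring via Theorem~\ref{thm:structure}(i) and the remark preceding the theorem about the left/right convention swap. Your proposal is therefore not a different route but a correct and considerably more detailed reconstruction of how Amitsur's lemmas assemble into the statement; in particular, your direct splitting argument in (ii) via $g\mapsto R_g$ and your derivation of the ``$S_f\cong S_g$ implies'' clause in (iv) from preservation of the right nucleus are exactly the small additions needed beyond pure citation, and both are sound.

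One simplification: your care in arguing that an $A$-polynomial is never two-sided is unnecessary here. In characteristic $0$ with $\delta\neq 0$ the derivation is never algebraic, so the center of $K[t;\delta]$ is $F$ and the only two-sided elements are nonzero constants; thus every $f$ of degree $m\geq 2$ automatically fails to be two-sided, $S_f$ is never associative, and Theorem~\ref{thm:structure}(i) applies without further discussion. This replaces your infinite-dimension argument at the outset. Your treatment of (v), including the observation that $R/Re_m\cong (R/Rt)^m$ and the ``all composition factors similar'' step for $A$-polynomials, is the right shape; the latter is indeed the content behind Amitsur's dimension bound, and you are correct to flag it as the place needing the most care if one were to write out a full proof rather than cite.
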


 Let $L/K$ be a field extension such that $\delta$ extends to
  $L$. Then $L[t;\delta]$ is an Ore extension of $K[t;\delta]$ and the constant field  $F={\rm Const}(\delta|_K)$
  of $\delta=\delta|_K$ is contained in the constant field $C={\rm Const}(\delta)$.
  If $L=K\cdot C$ is the composite field of $K$ and $C$, we say $L$ is a \emph{constant extension} of $K$.
  It is clear that for $f\in K[t;\delta]$,
  $${\rm Nuc}_r(K[t;\delta]/K[t;\delta]f)\subset {\rm Nuc}_r(L[t;\delta]/L[t;\delta]f).$$

\begin{theorem}
Let  $f\in K[t;\delta]$ be of degree $m$ and $L/K$ a field extension such that $\delta$ extends to
  $L$ and $C={\rm Const}(\delta)$. Suppose that $L$ is a constant extension of $K$.
\\ (i) If $f$ is an $A$-polynomial then $f\in L[t;\delta]$ is an $A$-polynomial and
$${\rm Nuc}_r(L[t;\delta]/L[t;\delta]f)\cong {\rm Nuc}_r(K[t;\delta]/K[t;\delta]f) \otimes_F C. $$
 (ii) Suppose
  $B={\rm Nuc}_r(S_f)$ is a central simple algebra of degree $m$ over $F$ with
 $f\in K[t;\delta]$. Then $C$ splits $B$ if and only if $f$ has a left or right root in $L$,
 i.e. $f=(t-a)g(t)\in L[t;\delta]$ or $f=g(t)(t-a)\in L[t;\delta]$.

 In particular, then $S_f\otimes_F C$ has right nucleus ${\rm Mat}_m(C)$.
\end{theorem}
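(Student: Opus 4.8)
The plan is to prove the two claims of the theorem in turn, using the constant-extension machinery developed just above together with Amitsur's results recorded in Theorem \ref{thm:general}.

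\medskip

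\emph{Part (i).} First I would recall that $f$ being an $A$-polynomial means there is $\widetilde f\in K[t;\delta]$ with $f\times\widetilde f\sim e_{mn}$. The resultant and the notion of similarity are both built from pseudo-linear transformations and from the module structure of $K[t;\delta]/K[t;\delta]f$, and both behave well under the constant extension $L=K\cdot C$ because $L[t;\delta]=K[t;\delta]\otimes_K L$ as a ring (the derivation on $L$ being the canonical extension, which exists and has constant field $C$ precisely because $L$ is a constant extension). Concretely, one checks that for $p,q\in K[t;\delta]$ one has $p\times q$ (computed in $L[t;\delta]$) similar to the $L$-polynomial obtained by base change from $p\times q$ computed in $K[t;\delta]$, and likewise that $p\sim q$ over $K$ forces $p\sim q$ over $L$; applying this to $f\times\widetilde f\sim e_{mn}$ shows $f$ is an $A$-polynomial over $L$. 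The identification of the right nucleus then follows from Theorem \ref{thm:general}(ii) applied over $L$ together with the inclusion ${\rm Nuc}_r(K[t;\delta]/K[t;\delta]f)\subset {\rm Nuc}_r(L[t;\delta]/L[t;\delta]f)$ displayed before the statement: both sides are central simple of degree $m$, the right-hand side over $C$ and the left-hand side over $F$ with $C$ an $F$-algebra, so the natural $C$-algebra map ${\rm Nuc}_r(S_f)\otimes_F C\to {\rm Nuc}_r(L[t;\delta]/L[t;\delta]f)$ is a nonzero map of central simple $C$-algebras of equal dimension $m^2$, hence an isomorphism.

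\medskip

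\emph{Part (ii).} Now suppose $B={\rm Nuc}_r(S_f)$ is central simple of degree $m$ over $F$. By part (i), $B\otimes_F C\cong {\rm Nuc}_r(L[t;\delta]/L[t;\delta]f)$, so $C$ splits $B$ if and only if ${\rm Nuc}_r(L[t;\delta]/L[t;\delta]f)\cong {\rm Mat}_m(C)$. The point is to translate the latter, via Amitsur's criterion Theorem \ref{thm:general}(v) applied over the base field $L$ (with constant field $C$), into a statement about roots of $f$ in $L$: ${\rm Nuc}_r(L[t;\delta]/L[t;\delta]f)\cong {\rm Mat}_m(C)$ iff $f$ (over $L$) has an irreducible factor similar to $t+c$ for some $c\in L$. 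An irreducible factor similar to a linear polynomial is itself linear (similar polynomials have equal degree), so this says exactly that $f$ admits a linear right factor $t-a$ over $L$, i.e.\ a right root $a\in L$; combined with the left/right symmetry (passing to the opposite ring, or using that $f$ and its ``formal adjoint'' share the property of being an $A$-polynomial with isomorphic invariant ring) this yields the stated equivalence with $f$ having a left or right root in $L$. The final sentence is then immediate: when $C$ splits $B$, part (i) gives ${\rm Nuc}_r(S_f\otimes_F C)={\rm Nuc}_r(S_f)\otimes_F C=B\otimes_F C\cong {\rm Mat}_m(C)$, where one also notes $S_f\otimes_F C\cong L[t;\delta]/L[t;\delta]f$ as $C$-algebras.

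\medskip

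The main obstacle I expect is Part (ii): verifying carefully that Amitsur's splitting criterion Theorem \ref{thm:general}(v) can legitimately be invoked over $L$ with constant field $C$ (this needs that $L$ is genuinely a constant extension so that all the hypotheses of \cite{Am} hold there), and then pinning down the left-versus-right root asymmetry. Petit algebras are built from \emph{right} division, so ``$t+c$ is an irreducible factor of $f$'' naturally produces a right root; getting the ``left root'' alternative requires either an explicit passage to the opposite algebra ${}_fS$ or an appeal to the fact that a central simple algebra is split by $C$ iff its opposite is, together with the description of how left roots of $f$ correspond to right roots of the reversed polynomial. The resultant/similarity base-change lemmas needed in Part (i) are routine but must be stated precisely, since they are what makes the dimension count $m^2=m^2$ force an isomorphism rather than merely an injection.
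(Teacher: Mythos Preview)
Your proposal is correct and follows the same route as the paper. The paper's own ``proof'' is a single sentence citing Amitsur: it says the theorem follows from \cite[Theorem 20]{Am} and \cite[Corollary, p.~270]{Am}; you have essentially unpacked those citations using their restatement in Theorem~\ref{thm:general}, and the base-change and dimension-count steps you outline are exactly what underlies Amitsur's argument. Your concern about the left/right root asymmetry is in fact a non-issue: once ${\rm Nuc}_r(L[t;\delta]/L[t;\delta]f)\cong{\rm Mat}_m(C)$, Theorem~\ref{thm:general}(v) tells you \emph{all} irreducible factors of $f$ over $L$ are similar to a linear polynomial, so $f$ acquires both a left and a right linear factor simultaneously.
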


This follows from \cite[Theorem 20]{Am} and \cite[Corollary, p.~270]{Am}.

\begin{remark}\label{re:6}
Since every automorphism  of a nonassociative algebra maps the right nucleus onto itself,
for every $A$-polynomial $f$ which is not two-sided, each $H\in {\rm Aut}_F(S_f)$ satisfies
$H|_B\in {\rm Aut}_F(B)$
when restricted to the central simple algebra $B={\rm Nuc}_r(S_f)$, thus $H|_B$ is an
inner automorphism of $B$. By an analogous argument, also $H|_K\in {\rm Aut}_F(K)$.
\end{remark}

%
%

\section{Algebras whose right nucleus is split by an extension in which $F$ is algebraically closed}

 Let $F$ be a field of characteristic 0.

\begin{theorem} \cite[Lemma 20]{Am}\label{thm:lemma20}
 (i) Every central simple algebra $B$ of degree $m$ over $F$ which is split by a field extension
$K/F$ in which $F$ is algebraically closed, is isomorphic to ${\rm Nuc}_r(S_f)$ for some $f\in K[t;\delta]$ of degree $m$ and a suitable $\delta$ with
$F={\rm Const}(\delta)$. The differential polynomial $f$ is an $A$-polynomial.
\\ (ii) Every central division algebra $D$ of degree $m$ over $F$ is isomorphic to ${\rm Nuc}_r(S_f)$
for some $f\in K[t;\delta]$ of degree $m$ and a suitable differential field $(K,\delta)$.
\end{theorem}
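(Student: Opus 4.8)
The plan is to prove (i) by constructing a pseudo-linear transformation of rank $m$ whose invariant ring is forced to contain, and therefore to equal, a copy of $B$, and then to deduce (ii) from (i) by taking $K$ to be a suitable splitting field of $D$. For (i): since ${\rm char}\,F=0$ and $F$ is algebraically closed in $K$, \cite[Theorem 4.2]{N} (recalled above) supplies a derivation $\delta$ of $K$ with ${\rm Const}(\delta)=F$; fix it. As $K$ splits $B$, fix an isomorphism $B\otimes_F K\cong {\rm End}_K(V)$ with $\dim_K V=m$, so that $V$ becomes a left $B\otimes_F K$-module; through $b\mapsto b\otimes 1$ the algebra $B$ then acts on $V$ by $K$-linear maps, because $1\otimes K$ is the centre of $B\otimes_F K$ and acts on $V$ by scalars. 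The heart of the argument is to find a pseudo-linear transformation $T$ on $V$ commuting with this $B$-action. Start from any pseudo-linear transformation $D_0$ on $V$ (one exists: take a $K$-basis and let $D_0$ act by $\delta$ on coordinates). Then $b\mapsto [D_0,b]$ is an $F$-linear derivation of $B$ into ${\rm End}_K(V)=B\otimes_F K$, the $B$-bimodule structure on the target being via $b\mapsto b\otimes 1$; it prolongs to a $K$-linear derivation of the central simple $K$-algebra $B\otimes_F K$ (send $b\otimes\alpha$ to $[D_0,b]\,(1\otimes\alpha)$), which is therefore inner, say $x\mapsto [\sigma,x]$ with $\sigma\in B\otimes_F K={\rm End}_K(V)$. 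Now $T:=D_0-\sigma$ is again a pseudo-linear transformation on $V$, and $[T,b]=[D_0,b]-[\sigma,b]=0$ for all $b\in B$.

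Let $f\in K[t;\delta]$ be a characteristic polynomial of $(V,T)$, so $\deg f=m$. The invariant ring of $f$ is the subalgebra of ${\rm End}_K(V)$ consisting of the maps commuting with $T$, and by the previous step it contains $B$; up to replacing $B$ by $B^{\rm op}$ (again central simple of degree $m$ and split by $K$) should the chosen conventions introduce an opposite in identifying ${\rm Nuc}_r(S_f)$ with this endomorphism ring, we get $B\subseteq {\rm Nuc}_r(S_f)$. For the reverse inclusion, $\phi\mapsto [T,\phi]$ is a pseudo-linear transformation on the $m^2$-dimensional $K$-space ${\rm End}_K(V)$ whose kernel is exactly this invariant ring, and any $F$-linearly independent family of vectors in the kernel of a pseudo-linear transformation is $K$-linearly independent (because ${\rm Const}(\delta)=F$); hence $\dim_F {\rm Nuc}_r(S_f)\le m^2=\dim_F B$, so ${\rm Nuc}_r(S_f)=B$. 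Since $\dim_F {\rm Nuc}_r(S_f)=m^2$, Theorem~\ref{thm:general}(i) shows $f$ is an $A$-polynomial, and Theorem~\ref{thm:general}(ii) re-confirms that its invariant ring is central simple of degree $m$ and split by $K$.

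For (ii): every central division $F$-algebra $D$ of degree $m$ has a splitting field in which $F$ is algebraically closed --- for instance the function field $K=F(X)$ of its Severi--Brauer variety $X$, which is geometrically integral (so $F$ is algebraically closed in $K$) and splits $D$. Applying part (i) with $B=D$ and this $K$ gives $f\in K[t;\delta]$ of degree $m$ with ${\rm Nuc}_r(S_f)\cong D$.

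The step I expect to be the real obstacle is the construction of the $B$-equivariant pseudo-linear transformation $T$: namely, recognising the defect $b\mapsto [D_0,b]$ of an arbitrary $D_0$ as the restriction of an inner derivation of $B\otimes_F K$, so that it can be cancelled. The remaining ingredients --- the dimension estimate for kernels of pseudo-linear transformations, the appeals to Theorem~\ref{thm:general}, the bookkeeping of opposite algebras, and the correspondence between $(V,T)$ and the module attached to its characteristic polynomial recalled above --- are routine.
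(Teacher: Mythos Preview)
Your proposal is correct. The paper itself does not prove part~(i) --- the theorem is simply quoted as \cite[Lemma~20]{Am} --- and only records, in the paragraph following the statement, that (ii) follows from (i) by taking $K$ to be the function field of the Severi--Brauer variety of $D$, exactly as you do. Your argument for (i) is essentially Amitsur's original proof: correct an arbitrary pseudo-linear $D_0$ on $V$ by the element $\sigma$ realising the inner derivation $b\mapsto [D_0,b]$ of the central simple $K$-algebra $B\otimes_F K$, so that $T=D_0-\sigma$ commutes with $B$; then bound $\dim_F{\rm Nuc}_r(S_f)$ via the Wronskian-type fact that $F$-independent vectors in the kernel of a pseudo-linear map are $K$-independent. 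The one point you leave implicit is the existence of the characteristic polynomial $f$ of $(V,T)$, i.e.\ that $(V,T)$ is a cyclic $K[t;\delta]$-module; this is the classical cyclic vector theorem, valid here since ${\rm char}\,K=0$ and $\delta\neq 0$, and the paper (following Amitsur) takes it for granted as well. Your hedge about a possible passage to $B^{\rm op}$ is harmless for the existence statement being proved.
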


Note that (ii) follows from (i), since for every central division algebra $D$ over $F$,
the function field $K(X)$ of the Severi-Brauer variety $X$ of $D$ splits $D$  (\cite[p.~245]{Am} or \cite{Am3}),
 and we can always find a derivation $\delta$ on $K(X)$ with $F={\rm Const}(\delta)$,
 as $F$ is algebraically closed in $K(X)$.

As an immediate consequence of  Theorem \ref{thm:lemma20} and Remark \ref{re:6}, we now get the following results:

\begin{theorem} \label{thm:mainI}
 For every central simple algebra $B$ of degree $m$ over $F$  which
is split by a field extension $K/F$ in which $F$ is algebraically closed,
there is a
derivation $\delta$ on $K$ with field of constants $F$ and a differential polynomial $f\in K[t;\delta]$ of degree $m$,  such that
$$S_f=K[t;\delta]/K[t;\delta]f$$
is an infinite-dimensional algebra over $F$ with right nucleus $B$ and left and middle nucleus $K$.
Every automorphism $H\in {\rm Aut}_F(S_f)$ extends an inner automorphism of $B$.
\end{theorem}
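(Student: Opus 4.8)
The plan is to derive Theorem~\ref{thm:mainI} almost entirely by assembling the ingredients already collected in the excerpt, so the ``proof'' is really a matter of checking that each piece applies and that the dimension claim holds. First I would invoke Theorem~\ref{thm:lemma20}(i): since $B$ is a central simple algebra of degree $m$ over $F$ split by $K/F$ with $F$ algebraically closed in $K$, there is a derivation $\delta$ on $K$ with $\mathrm{Const}(\delta)=F$ and an $A$-polynomial $f\in K[t;\delta]$ of degree $m$ with $\mathrm{Nuc}_r(S_f)\cong B$. This immediately supplies the required $\delta$ and $f$, and the isomorphism $\mathrm{Nuc}_r(S_f)\cong B$ is exactly the right-nucleus claim.

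Next I would address the remaining structural assertions about $S_f$. Because $\delta\neq0$ (this is one of the standing assumptions, and indeed $\delta$ must be nontrivial here, since a trivial derivation would force $F=K$ contradicting $B$ being genuinely of degree $m>1$ unless $B$ is split, which one checks separately or simply absorbs), the polynomial $f$ cannot be two-sided: if it were, $S_f$ would be associative and $\mathrm{Nuc}_r(S_f)=S_f$ would be commutative-ish in the wrong way, and more to the point, for an $A$-polynomial that is two-sided one would get $\mathrm{Nuc}_r(S_f)=S_f$ of dimension $m$ over $F$, contradicting Theorem~\ref{thm:general}(i) which says $\dim_F\mathrm{Nuc}_r(S_f)=m^2$ when $f$ is an $A$-polynomial (and $m\geq2$). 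Hence $S_f$ is not associative, and Theorem~\ref{thm:structure}(i) then gives $\mathrm{Nuc}_l(S_f)=\mathrm{Nuc}_m(S_f)=K$, which is the left/middle nucleus claim.

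For the dimension statement: $S_f$ is by definition $R_m$, the set of differential polynomials of degree $<m$, which is a left $K$-vector space of dimension $m$; so $\dim_F S_f=m\cdot[K:F]$. Since $F$ is algebraically closed in $K$ and $B$ is a \emph{nontrivial} central simple algebra — or more simply, since $K$ carries a nontrivial derivation with constant field $F$, forcing $[K:F]=\infty$ (a nontrivial derivation of a finite extension in characteristic zero is impossible, as noted after Proposition~\ref{prop:important}) — we get $\dim_F S_f=\infty$. This is the one spot where I would be slightly careful: I want to be sure the $K$ produced by Theorem~\ref{thm:lemma20}(i) genuinely has $[K:F]=\infty$; but this is automatic in characteristic zero whenever $\delta\neq0$, and $\delta=0$ is excluded.

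Finally, the automorphism statement follows from Remark~\ref{re:6}: $f$ is an $A$-polynomial that is not two-sided (established above), so every $H\in\mathrm{Aut}_F(S_f)$ restricts to an $F$-automorphism of $B=\mathrm{Nuc}_r(S_f)$, and by Skolem--Noether every $F$-automorphism of the central simple algebra $B$ is inner; hence $H$ extends an inner automorphism of $B$. I expect the only real ``obstacle'' — and it is a minor bookkeeping point rather than a genuine difficulty — is confirming that $f$ is not two-sided so that Theorem~\ref{thm:structure}(i) and Remark~\ref{re:6} are applicable; everything else is direct citation of Theorem~\ref{thm:lemma20}, Theorem~\ref{thm:general}, Theorem~\ref{thm:structure}, and Remark~\ref{re:6}.
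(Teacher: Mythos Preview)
Your proposal is correct and follows exactly the route the paper takes: the paper simply says Theorem~\ref{thm:mainI} is an immediate consequence of Theorem~\ref{thm:lemma20} and Remark~\ref{re:6}, and you have spelled out precisely those citations together with Theorem~\ref{thm:structure}(i) for the left/middle nucleus and the observation (noted after Proposition~\ref{prop:important}) that $[K:F]=\infty$ in characteristic zero whenever $\delta\neq0$. One small slip: in your argument that $f$ is not two-sided you write that $S_f$ would have ``dimension $m$ over $F$'', but $\dim_F S_f = m\,[K:F]$, not $m$; the contradiction you want is simply that $S_f$ is infinite-dimensional over $F$ while $\mathrm{Nuc}_r(S_f)\cong B$ has dimension $m^2<\infty$, so $S_f\neq\mathrm{Nuc}_r(S_f)$ and $S_f$ is not associative.
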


We conclude from \cite[p.~246]{Am}:

\begin{corollary}  \label{cor:mainI}
 For every central division algebra $D$ of degree $m$ over $F$,
 there exists a field extension $K/F$ in which $F$ is algebraically closed,
a derivation $\delta$ on $K$ with field of constants $F$, and a differential polynomial $f\in K[t;\delta]$ of degree $m$,
 such that
$$S_f=K[t;\delta]/K[t;\delta]f$$
is an infinite-dimensional  algebra over $F$ with right nucleus $D$, and left and middle nucleus $K$.
$K$ splits $D$ and every automorphism $H\in {\rm Aut}_F(S_f)$
  extends an inner automorphism of $D$.
\end{corollary}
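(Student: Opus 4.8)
The plan is to obtain this as the special case $B = D$ of Theorem~\ref{thm:mainI}, so the only thing that needs to be supplied is a splitting field $K$ of $D$ in which $F$ is algebraically closed. Once such a $K$ is at hand, Theorem~\ref{thm:mainI} delivers all of the asserted conclusions verbatim: the derivation $\delta$ on $K$ with $F = {\rm Const}(\delta)$, the polynomial $f \in K[t;\delta]$ of degree $m$, the fact that $S_f = K[t;\delta]/K[t;\delta]f$ is infinite-dimensional over $F$ with ${\rm Nuc}_r(S_f) = D$ and ${\rm Nuc}_l(S_f) = {\rm Nuc}_m(S_f) = K$, and the statement that every $H \in {\rm Aut}_F(S_f)$ extends an inner automorphism of $D$.

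For the splitting field I would take $K = F(X)$, the function field of the Severi--Brauer variety $X$ of $D$. Two classical facts are needed here. First, $K$ splits $D$; this is the defining property of $X$, as recorded in \cite[p.~245--246]{Am} (see also \cite{Am3}). Second, $F$ is algebraically closed in $K$: since $X$ becomes isomorphic to $\mathbb{P}^{m-1}$ over an algebraic closure $\overline F$ of $F$, it is geometrically integral over $F$, and for a geometrically integral $F$-variety the base field is algebraically closed in the function field. (Equivalently, the algebraic closure of $F$ in $F(X)$ would remain a field of constants after base change to $\overline F$, which forces it to be $F$ itself.)

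With $K/F$ chosen this way the hypotheses of Theorem~\ref{thm:mainI} are satisfied with $B = D$, and the corollary follows at once; the extra clause that $K$ splits $D$ is built into the choice of $K$. The only genuinely non-formal ingredient is the geometric integrality of the Severi--Brauer variety used in the second step, so I do not expect a real obstacle. One could alternatively quote Theorem~\ref{thm:lemma20}(ii) for the isomorphism $D \cong {\rm Nuc}_r(S_f)$, but Theorem~\ref{thm:mainI} is the cleaner reference here since it also records the left and middle nucleus and the behaviour of ${\rm Aut}_F(S_f)$.
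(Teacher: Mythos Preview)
Your proposal is correct and follows essentially the same route as the paper: take $K$ to be the function field of the Severi--Brauer variety of $D$, observe that $K$ splits $D$ and that $F$ is algebraically closed in $K$, and then apply Theorem~\ref{thm:mainI} with $B=D$. The paper is slightly terser---it simply cites Amitsur for the properties of the Severi--Brauer function field rather than spelling out the geometric-integrality argument---but the logic is identical.
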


The fact that $D$ is a division algebra does not imply that $f$ is irreducible, so $S_f$ might not be
a right division algebra.

\begin{corollary}
If the differential polynomial $f$ in Corollary \ref{cor:mainI} is irreducible, then $S_f$ is an infinite-dimensional right division algebra over $F$
and therefore does not have zero divisors.
\end{corollary}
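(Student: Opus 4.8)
The plan is to deduce everything from results already assembled in the excerpt, so the argument is essentially a bookkeeping exercise rather than a new computation. First I would recall the setup: by Corollary~\ref{cor:mainI} we have a field extension $K/F$ in which $F$ is algebraically closed, a derivation $\delta$ on $K$ with $\mathrm{Const}(\delta)=F$, and a differential polynomial $f\in K[t;\delta]$ of degree $m$ with $S_f=K[t;\delta]/K[t;\delta]f$ having right nucleus $D$ and left and middle nucleus $K$. Since $F$ is algebraically closed in $K$ and $K\neq F$ (as $\delta\neq0$), the extension $K/F$ is infinite, hence $S_f$ is infinite-dimensional over $F$; this is already part of the statement of Corollary~\ref{cor:mainI}, so I would simply quote it.

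Next I would invoke Theorem~\ref{thm:structure}(vi): for $f\in R=K[t;\delta]$, irreducibility of $f$ is equivalent to $S_f$ being a right division algebra over $F$, which in turn is equivalent to $S_f$ having no zero divisors. Thus the hypothesis ``$f$ irreducible'' immediately gives both conclusions of the corollary: $S_f$ is a right division algebra, and it has no zero divisors. The infinite-dimensionality is carried over from Corollary~\ref{cor:mainI} unchanged. That is really all there is to it.

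The one point worth a sentence of care is why we cannot upgrade ``right division algebra'' to ``(two-sided) division algebra'' here: that would require $L_h$ to be surjective for all nonzero $h$, which by Theorem~\ref{thm:structure}(iv) and Theorem~\ref{thm:bounded} needs $S_f$ to be finite-dimensional over $F$ or free of finite rank over $\mathrm{Nuc}_r(S_f)=D$, and in the present infinite-dimensional situation there is no reason for either to hold. So I would explicitly note that the conclusion is genuinely only ``right division algebra'', consistent with the remark preceding the corollary that $D$ being a division algebra does not force $f$ irreducible nor $S_f$ two-sided-division.

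There is no real obstacle: the only thing to be slightly careful about is citing the correct equivalence (Theorem~\ref{thm:structure}(vi) rather than (iv) or (iii)), and making sure the infinite-dimensionality is attributed to Corollary~\ref{cor:mainI} rather than re-derived. If one wanted to be self-contained about infinite-dimensionality, one would argue that $[S_f:F]=m\,[K:F]$ and $[K:F]=\infty$ because a finite extension $K/F$ in characteristic zero forces $\delta=0$ (cf.\ the remark after Proposition~\ref{prop:important}), contradicting $\delta\neq0$; but quoting Corollary~\ref{cor:mainI} is cleaner.
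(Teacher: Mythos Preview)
Your proposal is correct and matches the paper's approach: the corollary is stated without proof in the paper, as it follows immediately from Theorem~\ref{thm:structure}(vi) together with the infinite-dimensionality already established in Corollary~\ref{cor:mainI}, exactly as you outline. Your additional remark about why one cannot upgrade to a two-sided division algebra is a nice supplement, consistent with the sentence following the corollary (that an irreducible $A$-polynomial here is not bounded).
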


If $f$ is an irreducible $A$-polynomial, it is not bounded by Theorem \ref{thm:bounded}.

\begin{example}
Suppose $F=\mathbb{R}$. The only central division algebra over $\mathbb{R}$ is $D=(-1,-1)_\mathbb{R}$.
The function field $K$ of the projective real conic given by $x^2+y^2+z^2=0$ is
a  field extension of $\mathbb{R}$ in which $\mathbb{R}$ is algebraically closed and that splits $D$.
There exists a derivation $\delta$ on $K$ with $\mathbb{R}={\rm Const}(\delta)$.
Thus there is an $A$-polynomial $f\in K[t;\delta]$ of degree $2$, such that
$$S_f=K[t;\delta]/K[t;\delta]f=K\oplus Kt$$
is an infinite-dimensional unital algebra over $\mathbb{R}$ with right nucleus $(-1,-1)_\mathbb{R}$, and left
and middle nucleus $K$.

For $B={\rm Mat}_m(\mathbb{R})$ and  any field
 extension $K'$ of $\mathbb{R}$ in which $\mathbb{R}$ is algebraically closed,
with a derivation $\delta$ on $K'$ such that $\mathbb{R}={\rm Const}(\delta)$,
 there is a reducible $A$-polynomial $f\in K'[t;\delta]$ of degree $m$, such that
$$S_f=K'[t;\delta]/K'[t;\delta]f$$
is an infinite-dimensional unital algebra over $\mathbb{R}$ with right nucleus $B$ and left
and middle nucleus $K'$.
\end{example}

%
%

\section{Algebras whose right nucleus is a $p$-algebra}

Let now $K$ be a field of characteristic $p>0$ together with a derivation $\delta$ on $K$. Put $R=K[t;\delta]$
and $F={\rm Const}(\delta)$.
 There are two cases which can occur: either $\delta$ is an algebraic derivation, or $\delta$ is transcendental
 which means $[K:F]=\infty$.
 We assume that $\delta$ is an algebraic derivation of degree $p^e$ with minimum polynomial
$$g(t)=t^{p^e}+c_1t^{p^{e-1}}+\dots+ c_et\in F[t]$$
 of degree $p^e$. Then $K=F(u_1, \dots,u_e)=F(u_1)\otimes_F\dots \otimes_F F(u_e)$ with $u_i^p=a_i\in F$
 for all $i\in\{1,\dots,e\}$, and $[K:F]=p^e$,
 that is $K$ is a finite purely inseparable field extension of exponent one and
$K^p\subset F\subset K$.
 The center $C(R)$ of $R$ is $F[z]$ with $z=g(t)-d_0$, $d_0\in F$,
 and the two-sided elements in $R$ have the form $uh(t)$ with $u\in K^\times$, $h(t)\in C(R)$.

Recall that a central simple algebra $B={\rm Mat}_r(D)$ over a field $F$ of characteristic $p$ is a \emph{$p$-algebra}
 if it has index
 $p^n$, equivalently, if
  its exponent is a power of $p$ \cite[p. 154]{J96}.

Note that for $f(t)=g(t)-d\in F[t]$ (so $f(t)$ is two-sided in this case),
$$(K,\delta,d)=K[t;\delta]/K[t;\delta] f(t)$$
is an associative central simple $F$-algebra called a \emph{differential extension of $K$} and treated in \cite[p.~23]{J96}.
  $K$ is a maximal subfield of $(K,\delta,d)$.

\begin{theorem} \cite[Lemma 20']{Am}\label{thm:lemma20'}
 Let $B$ be a $p$-algebra of degree $m$ over $F$ which is split by a
purely inseparable extension $K$ of exponent one (i.e., has exponent $p$), such that $m\leq [K:F]$.
Then $$B\cong{\rm Nuc}_r(S_f)$$
 for some $f\in K[t;\delta]$ of degree $m$ and a suitable $\delta$ with
$F={\rm Const}(\delta)$.
\end{theorem}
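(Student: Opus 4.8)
The plan is to imitate Amitsur's strategy from the characteristic-zero case, suitably adapted to the $p$-algebra setting, and in fact Amitsur's \cite[Lemma 20']{Am} already carries out most of the work; what remains is to assemble the pieces in our language of Petit algebras and invariant rings. First I would start from a $p$-algebra $B$ of degree $m$ over $F$ which is split by a purely inseparable field extension $K/F$ of exponent one. Since $K^p\subseteq F$, the exponent-one hypothesis gives $K=F(u_1,\dots,u_e)$ with $u_i^p\in F$, so $[K:F]=p^e$; the hypothesis $m\le[K:F]=p^e$ ensures there is enough room inside $K$ to realize a degree-$m$ maximal commutative subobject. As noted in Section 4, any derivation with constant field $F$ here is algebraic of degree $p^e$ with minimal polynomial $g(t)=t^{p^e}+c_1t^{p^{e-1}}+\dots+c_e t\in F[t]$, so $R=K[t;\delta]$ has center $F[z]$ with $z=g(t)-d_0$. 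The key structural fact we will exploit is Theorem \ref{thm:structure}(i): ${\rm Nuc}_r(S_f)=\{g\in R_m\mid fg\in Rf\}$ is precisely Amitsur's invariant ring of $f$, so producing an $f$ whose invariant ring is $B$ is the same as producing an $f$ whose Petit algebra has right nucleus $B$.

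Next I would invoke the structure of $p$-algebras split by a purely inseparable exponent-one field. By Witt's theorem a $p$-algebra of exponent $p$ split by $K$ is Brauer-equivalent to a product of cyclic $p$-algebras $[a_i,b_i)$ with $a_i\in F$ and the $u_i$ as the $p$-th roots; over $F$, the algebra $B$ of degree $m$ itself can be presented as a subalgebra of a differential extension, or directly: choose generators so that $B$ contains the maximal subfield $L\subseteq K$ of degree $m$ over $F$ (here we use $m\le[K:F]$ and the fact that every intermediate field of the exponent-one extension $K/F$ has degree a power of $p$, so one picks $L$ of degree $m$ — strictly this forces $m$ to be a power of $p$, which is automatic since $B$ is a $p$-algebra of exponent $p$ hence has degree a power of $p$). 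Then realize $B$ as generated over $L$ by an element acting as a pseudo-derivation; concretely, restricting $\delta$ appropriately, $B$ becomes a quotient $L[t;\delta|_L]/(h)$ for a two-sided $h$, i.e. a differential extension $(L,\delta|_L,d)$ in the notation of Section 4. This is exactly the content of Amitsur's \cite[Lemma 20']{Am}, which asserts the existence of such an $f\in K[t;\delta]$ of degree $m$ with ${\rm Nuc}_r(S_f)\cong B$ and moreover that $f$ is an $A$-polynomial.

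Then I would transport this $f$ from $L[t;\delta|_L]$ up to $K[t;\delta]$: since $K/L$ is itself a constant-type extension within the same differential field (both have constant field $F$ under the chosen $\delta$), the invariant ring is stable, so ${\rm Nuc}_r(K[t;\delta]/K[t;\delta]f)$ is still $B$; here the earlier results on constant extensions (the inclusion ${\rm Nuc}_r(K[t;\delta]/K[t;\delta]f)\subseteq{\rm Nuc}_r(L[t;\delta]/L[t;\delta]f)$ read in the correct direction, together with the dimension count from Theorem \ref{thm:general}(i)--(ii)) pins down the isomorphism type. Finally, by Theorem \ref{thm:general}(ii) the invariant ring of an $A$-polynomial of degree $m$ is automatically a central simple $F$-algebra of degree $m$ split by $K$, and a dimension count gives that it has dimension $m^2$, matching $B$; so the central simple algebra produced is Brauer-equivalent to $B$ and of the same degree, hence isomorphic to $B$.

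The main obstacle I expect is the matching step: ensuring that the invariant ring one gets out of the construction is isomorphic to the prescribed $B$ and not merely Brauer-equivalent to it or to some twist. In characteristic zero Amitsur handles this through the $A$-polynomial formalism and the resultant operation (Theorem \ref{thm:general}(iii),(iv)), which lets one add and compare invariant rings in the Brauer group while controlling degrees; the characteristic-$p$ analogue requires the parallel fact that for $p$-algebras of exponent $p$ the relevant differential-polynomial construction is compatible with products of cyclic $p$-algebras, so that one can build $B$ one cyclic factor at a time and take resultants. Since the statement explicitly cites \cite[Lemma 20']{Am}, the cleanest route is simply to quote that lemma for the existence of $f$ and then note — as above — that Theorem \ref{thm:structure}(i) identifies ${\rm Nuc}_r(S_f)$ with Amitsur's invariant ring, so the translation into the Petit-algebra language is immediate. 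I would therefore keep the proof short: cite \cite[Lemma 20']{Am} for the existence of the $A$-polynomial $f\in K[t;\delta]$ of degree $m$ with invariant ring $B$, and invoke Theorem \ref{thm:structure}(i) to conclude ${\rm Nuc}_r(S_f)\cong B$.
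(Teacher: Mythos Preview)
The paper gives no proof of this theorem at all: it is simply stated with the citation \cite[Lemma 20']{Am}, and the only translation needed is the observation (already made in Theorem \ref{thm:structure}(i)) that Amitsur's invariant ring of $f$ coincides with ${\rm Nuc}_r(S_f)$. Your final recommendation---quote \cite[Lemma 20']{Am} for the existence of $f$ and invoke Theorem \ref{thm:structure}(i) for the identification---is therefore exactly what the paper does.

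That said, the intermediate sketch you offer before arriving at this conclusion has real gaps, and you should not present it as an alternative argument. Two concrete issues: first, the degree $m$ of a $p$-algebra need not be a power of $p$ (only the index is), so there is in general no intermediate field $L$ of $K/F$ with $[L:F]=m$, and your reduction to a two-sided $h\in L[t;\delta|_L]$ breaks down; second, in the lifting step you appeal to the inclusion of invariant rings under constant extensions, but the inclusion ${\rm Nuc}_r(K[t;\delta]/K[t;\delta]f)\subset{\rm Nuc}_r(L[t;\delta]/L[t;\delta]f)$ goes from the smaller field to the larger one, which is the wrong direction for transporting $f$ from $L$ up to $K$ while preserving the invariant ring. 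Since you yourself flag these as obstacles and retreat to the direct citation, the proposal is fine as a proof, but strip out the sketch.
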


We start by looking at the case that $m=[K:F]=p^e$ and
immediately obtain (i) and (ii) in the following result on $p$-algebras by employing only
Theorem \ref{thm:structure} (v) from
Petit \cite{P66} and Amitsur's Theorem \ref{thm:lemma20'} (only the fact that then $B$ is cyclic
uses Hood's Main Theorem \cite[Main Theorem]{H}):

 \begin{theorem} \label{thm:mammone}
Let $B$ be a $p$-algebra of degree $m$ over $F$  which is split by a purely inseparable field extension $K$ of exponent one
with $m=[K:F]$.
\\ (i) There is an algebraic derivation $\delta$ on $K$  of degree $m$ with  minimum polynomial $g(t)$ such that
the
center  of $K[t;\delta]$ is $F[z]$ with $z=g(t)-d_0$, $d_0\in F$,
and
$$B=(K,\delta,d_0)$$
with $f(t)=g(t)-d_0$. $B$ is a cyclic algebra.
\\ (ii) $F[t]/(f)$ is a subfield of $B$ of degree $p^e$ over $F$ if and only if $f$ is irreducible in $F[t]$.
\\ (iii)  $f\in K[t;\delta]$ is irreducible if and only if $B$ is a division algebra.
\\ (iv) $B\cong {\rm Mat}_{p^e}(F)$ if and only if there is $b\in K$ such that
$$d_0=V_g(b)=V_{p^e}(b)+c_1V_{p^{e-1}}(b)+\dots+c_eb.$$
\end{theorem}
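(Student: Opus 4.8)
The plan is to handle the four parts in turn, using the first to pin down $f$ explicitly and then deducing (ii)--(iv) from the structure theory of $S_f$ together with elementary factorization in $K[t;\delta]$. Since $K/F$ is finite of degree $m=p^e$, the derivation $\delta$ is algebraic with minimum polynomial a $p$-polynomial $g$ of degree $p^e$, so the whole setup at the start of this section applies to whatever $\delta$ arises. For (i), Theorem \ref{thm:lemma20'} produces a $\delta$ with $F={\rm Const}(\delta)$ and an $f\in K[t;\delta]$ of degree $m$ with $B\cong{\rm Nuc}_r(S_f)$. The key point is a dimension count: $\dim_F S_f=[K:F]\deg f=p^{2e}=m^2=\dim_F B=\dim_F{\rm Nuc}_r(S_f)$, which forces ${\rm Nuc}_r(S_f)=S_f$; hence $S_f$ is associative, so by Theorem \ref{thm:structure}(v) the polynomial $f$ is two-sided. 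Writing $f$ (monic) as $u\,h(t)$ with $u\in K^\times$ and $h(t)$ in the center $C(R)=F[g(t)]$ and comparing degrees in $t$ forces $f=g(t)-d_0$ for a suitable $d_0\in F$; then $B\cong S_f=K[t;\delta]/K[t;\delta]f=(K,\delta,d_0)$, which is cyclic by \cite[Main Theorem]{H}.

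Parts (ii) and (iii) are short consequences of what is already available. For (ii): since $\delta|_F=0$, the element $t\in B$ commutes with $F$, so the composite $F[t]\hookrightarrow K[t;\delta]\to B$ is a homomorphism of commutative rings, and by uniqueness of right division in $K[t;\delta]$ its kernel equals the ideal $(f)$ of $F[t]$; hence its image is $F[t]/(f)$, a subalgebra of $B$ of $F$-dimension $p^e$ which is a subfield precisely when $f$ is irreducible in $F[t]$. For (iii): $B\cong S_f$, and since $K/F$ is finite, Proposition \ref{prop:important} gives that $f$ irreducible implies $S_f$, hence $B$, is a division algebra; conversely, if $B\cong S_f$ is a division algebra then $f$ is irreducible by Theorem \ref{thm:structure}(iv).

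For (iv) I would use the identity $g(t-b)=g(t)-V_g(b)$ in $K[t;\delta]$, which follows from $(t-b)^{p^i}=t^{p^i}-V_{p^i}(b)$. If $d_0=V_g(b)$ for some $b\in K$, the automorphism of $K[t;\delta]$ fixing $K$ with $t\mapsto t+b$ carries $f=g(t)-d_0$ to $g(t)$, whence $B\cong K[t;\delta]/K[t;\delta]g(t)$; the representation $K[t;\delta]\to{\rm End}_F(K)$ sending $a\in K$ to left multiplication by $a$ and $t$ to $\delta$ has kernel $K[t;\delta]g(t)$ (it contains $g(t)$, the minimum polynomial of $\delta$, and $(K,\delta,0)$ is simple), so comparing $F$-dimensions gives $B\cong{\rm End}_F(K)\cong{\rm Mat}_{p^e}(F)$. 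Conversely, if $B\cong{\rm Mat}_{p^e}(F)$ it has a minimal left ideal of $F$-dimension $p^e$; pulled back along $K[t;\delta]\to B$, and using that $K[t;\delta]$ is a left principal ideal domain, this is $K[t;\delta]h$ with $h$ a right divisor of $f$, and $\dim_F(K[t;\delta]/K[t;\delta]h)=p^e\deg h=p^{2e}-p^e$ forces $\deg h=p^e-1$, so $f=(t-b)h$ for some $b\in K$. Since $g(t-b)$ is left-divisible by $t-b$ while $g(t)=g(t-b)+V_g(b)$, the remainder of $g(t)-d_0$ under left division by $t-b$ equals $V_g(b)-d_0$, which must vanish, giving $d_0=V_g(b)$.

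The main obstacle is part (iv), and specifically its forward direction: converting the statement that $B$ splits into the ring-theoretic assertion that $f$ acquires a linear one-sided factor in $K[t;\delta]$. This requires the dimension bookkeeping for left ideals of $K[t;\delta]$ modulo $K[t;\delta]f$, together with the (elementary but essential) computation of the one-sided remainder of $g(t)-d_0$ modulo $t-b$; once these are in place, the remaining parts are a matter of assembling the structure results already recorded in Section \ref{sec:prel} and Theorem \ref{thm:lemma20'}.
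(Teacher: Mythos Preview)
Your argument is correct. For parts (i)--(iii) you do exactly what the paper does: the dimension count $\dim_F S_f=m^2=\dim_F B$ forcing $S_f=B$ to be associative, hence $f$ two-sided and equal to $g(t)-d_0$; cyclicity via \cite{H}; the commutative subring $F[t]/(f)$ being a field precisely when $f$ is irreducible in $F[t]$; and the equivalence in (iii) via boundedness (the paper phrases this as a citation of \cite[Proposition~4]{G}, which is the same statement packaged through ${\rm Nuc}_r(S_f)$).

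The genuine difference is part (iv). The paper simply invokes \cite[Theorem~1.3.27]{J96}, whereas you supply a self-contained proof: the automorphism $t\mapsto t+b$ together with the representation $K[t;\delta]\to{\rm End}_F(K)$ for the ``if'' direction, and the minimal-left-ideal argument to extract a linear left factor $t-b$ of $f$ (hence $d_0=V_g(b)$ from the remainder computation) for the converse. Both routes are standard; yours has the advantage of keeping everything internal to the $K[t;\delta]$ machinery already set up in the paper, at the cost of a page of extra work, while the paper's citation is shorter but less transparent about why the criterion takes this particular form.
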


\begin{proof}
  (i) If $m=[K:F]$ then there is a differential  polynomial $f\in K[t;\delta]$ of degree $m$ and a suitable
 $\delta$ such that $B\cong{\rm Nuc}_r(S_f)$ by Theorem \ref{thm:lemma20'}. Here $B$ is an associative
  subalgebra of $S_f$ of dimension $m^2$ and $S_f$ has dimension $m^2$ as well. Therefore $S_f=B$ is associative and
  $f\in K[t;\delta]$ must be a two-sided differential  polynomial of degree $m$, i.e.
  $B=K[t;\delta]/(f)$ is a quotient algebra (Theorem \ref{thm:structure} (v)). Without loss of generality
   we may assume $f$ is monic. Thus $f\in C(R)$ and since $f$ has degree $m=p^e$, we obtain that
  $f(t)=g(t)-d_0$ and so $B=(K,\delta,d_0)$.
  $K$ is a purely inseparable field extension of $F$ which is an (even maximal) subfield of $B$ splitting $B$,
  therefore $B$ is cyclic \cite[Main Theorem]{H}.
\\ (ii)  Since here $f(t)\in F[t]$, we know that $F[t]/(f)$ is a subfield of $B$ of degree $p^e$ over $F$ if and only if $f$ is
  irreducible in $F[t]$.
  \\ (iii) is  \cite[Proposition 4]{G} and (iv) is a consequence from (i) together with
  Theorem \cite[Theorem 1.3.27]{J96}.
  \end{proof}

  \begin{remark} Let us briefly put the previous result into context:
 \\ (i) Let A be a central simple $p$-algebra of degree $p^n$ over $F$. It is a well known classical result
 that $A$ is cyclic over $F$ if and only if
$A$ has a subfield $K$ such that $K$ is a  purely inseparable extension of $F$ and $K$ is a splitting field for A
(this is \cite[Main Theorem]{H}, which removed Albert's restriction that $K$ be simple from \cite[Theorem (7.27)]{Al}).
  \\ (ii)
Mammone  characterized the central simple algebras split by a purely inseparable field extension $K$
of exponent one in \cite{M}: in particular, if $B$ is a central simple algebra over $F$
of degree $m=p^e$ containing $K$ where $[K:F]=m$, then $B$ is a \emph{differential crossed product}, that means
$B$ contains a $K$-basis of the form $\{z_1^{i_1}\cdots z_n^{i_n}\,|\, 0\leq i_k\leq p-1\}$
 satisfying a kind of commutativity law with elements of $K$ which involves a set of $n$ $F$-derivations of $K$.  The algebra $B$
  then yields elements $b_i=z_i^p$ and $u_{ij}=z_iz_j-z_jz_i$ in $K$. Conversely, given sets $B=\{b_i\,|\,i=1,\dots,n\}$
  and
  $U=\{u_{ij}:i,j=1,\dots,n\} $ satisfying certain relations involving  $F$-derivations of $K$, then
  $(U,B)$ arises from such a differential crossed product.
\end{remark}

In case $m<[K:F]=p^e$ we obtain a nonassociative algebra of dimension $mp^e$ containing $B$ as right nucleus:

 \begin{theorem}  \label{thm:mammoneII}
Let $B$ be a $p$-algebra of degree $m$ over $F$  which
is split by a purely inseparable extension $K$ of exponent one
such that $m<[K:F]$.
\\ (i) There is an algebraic derivation $\delta$ and a differential polynomial $f\in K[t;\delta]$ of degree $m$ such that
 $$S_f=K[t;\delta]/K[t;\delta]f$$
 is an algebra over $F$ of dimension $mp^e$ with  right nucleus $B$, left and middle nucleus $K$, and nucleus
 ${\rm Nuc}(S_f)=B\cap K$ an intermediate field of $K/F$, unequal to $K$.
\\ (ii) $f$ is irreducible if and only if $B$ is a division algebra, if and only if $S_f$ is a division algebra.
\\ (iii)  Every automorphism $H\in {\rm Aut}_F(S_f)$  extends an inner automorphism of $B$ and an automorphism of $K$.
\end{theorem}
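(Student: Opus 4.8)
The plan is to produce the derivation $\delta$ first and then read off all three parts from results already in hand. Since $B$ is a $p$-algebra of degree $m$ split by the purely inseparable exponent-one extension $K$ with $m<[K:F]$, Theorem \ref{thm:lemma20'} gives a derivation $\delta$ on $K$ with $F={\rm Const}(\delta)$ and an $f\in K[t;\delta]$ of degree $m$ with $B\cong{\rm Nuc}_r(S_f)$. Because $K/F$ is finite of degree $p^e$, the derivation $\delta$ is algebraic (of degree $p^e$), so the center of $R=K[t;\delta]$ is $F[z]$ with $z=g(t)-d_0$ as recalled at the start of this section. For part (i) I would note $S_f$ has $F$-dimension $m[K:F]=mp^e$ by construction (it is $R_m$, a free left $K$-module of rank $m$). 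Since $m<p^e$, $B$ has $F$-dimension $m^2<mp^e=\dim_F S_f$, so $S_f$ is strictly larger than its right nucleus and hence is \emph{not} associative; Theorem \ref{thm:structure}(i) then gives ${\rm Nuc}_l(S_f)={\rm Nuc}_m(S_f)=K$ immediately. For the nucleus, ${\rm Nuc}(S_f)={\rm Nuc}_l(S_f)\cap{\rm Nuc}_m(S_f)\cap{\rm Nuc}_r(S_f)=K\cap B$, which is a subfield of $K$ containing $F$; it cannot equal $K$, for if $K\subseteq B$ then $\dim_F B\ge[K:F]^2=p^{2e}>m^2$, contradicting $\dim_F B=m^2$ (here I use that $B$ has degree $m$, so dimension exactly $m^2$, from Theorem \ref{thm:general}(ii) or Theorem \ref{thm:lemma20'}). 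So ${\rm Nuc}(S_f)=B\cap K$ is a proper intermediate field of $K/F$.

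For part (ii), the chain of equivalences runs: $f$ irreducible $\iff$ $S_f$ a division algebra $\iff$ $B$ a division algebra. The first equivalence is Proposition \ref{prop:important}: since $K/F$ is finite, every $f\in R$ is bounded, so if $f$ is irreducible then $S_f$ is a division algebra; conversely, $S_f$ a division algebra forces $f$ irreducible by Theorem \ref{thm:structure}(iv) (the converse half). For $f$ irreducible $\iff$ $B$ a division algebra: $f$ is bounded, so by the remark after Theorem \ref{thm:bounded} (quoting \cite[Proposition 4]{G}), $f$ is irreducible if and only if ${\rm Nuc}_r(S_f)=B$ has no zero divisors, i.e. $B$ is a division algebra — $B$ being finite-dimensional and associative, no zero divisors is the same as being a division algebra.

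For part (iii) I would invoke Remark \ref{re:6} essentially verbatim. Any $H\in{\rm Aut}_F(S_f)$ permutes the nuclei, in particular $H({\rm Nuc}_r(S_f))={\rm Nuc}_r(S_f)$, so $H|_B\in{\rm Aut}_F(B)$; since $B$ is central simple over $F$, Skolem--Noether makes $H|_B$ inner. Likewise $H({\rm Nuc}_l(S_f))={\rm Nuc}_l(S_f)$ gives $H|_K\in{\rm Aut}_F(K)$. The one point requiring a word of care is that Remark \ref{re:6} was phrased in the characteristic-zero setting with $f$ an $A$-polynomial; but its proof used nothing beyond the facts that automorphisms preserve the right and left nuclei, that the right nucleus is central simple, and that the left nucleus is $K$ — all of which hold here by part (i).

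The main obstacle, such as it is, is bookkeeping rather than mathematics: making sure $S_f$ is genuinely nonassociative so that Theorem \ref{thm:structure}(i) applies (this is exactly where the hypothesis $m<[K:F]$, as opposed to $m=[K:F]$ in Theorem \ref{thm:mammone}, is used — a strict dimension count $m^2<mp^e$), and checking that ``purely inseparable of exponent one with $m<[K:F]$'' really does feed Theorem \ref{thm:lemma20'}, whose hypothesis is $m\le[K:F]$. A secondary point is confirming that $\dim_F B=m^2$ and not merely $\dim_F B\mid m^2$: this is guaranteed because $B$ has degree $m$ and is central simple, so $B\otimes_F K\cong{\rm Mat}_m(K)$ as $K$ splits it. Everything else is a direct citation of Theorems \ref{thm:structure}, \ref{thm:bounded}, \ref{thm:lemma20'}, Proposition \ref{prop:important}, and Remark \ref{re:6}.
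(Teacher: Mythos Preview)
Your proposal is correct and follows essentially the same route as the paper: invoke Theorem~\ref{thm:lemma20'} for the existence of $\delta$ and $f$, Theorem~\ref{thm:structure} for the left and middle nuclei, Proposition~\ref{prop:important} together with \cite[Proposition 4]{G} for part~(ii), and Remark~\ref{re:6} for part~(iii). The one difference worth noting is in showing $K\not\subseteq B$: the paper cites \cite[Theorem 9]{Pu16} (that $f$ not two-sided forces $K\not\subseteq{\rm Nuc}_r(S_f)$), whereas you give a self-contained dimension argument via the double-centralizer bound $[K:F]^2\le \dim_F B$, which is a perfectly valid substitute.
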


\begin{proof}
(i) The existence of a suitable $f$ follows from Theorem \ref{thm:lemma20'} and the statements on the left and middle nuclei
 from Theorem \ref{thm:structure}. Since $f$ is not two-sided,  $K$ is not contained in the right nucleus of $S_f$, i.e. not contained in $B$
\cite[Theorem 9]{Pu16}. Thus ${\rm Nuc}(S_f)=B\cap K$ is properly contained in $K$, so that it is an intermediate field
of the field extension $K/F$.
\\ (ii)
 By Proposition \ref{prop:important} and Theorem \ref{thm:structure}, $f$ is irreducible  if and only if $B$ is a division algebra, if and only if $S_f$
 is a division algebra.
\\ (iii) An automorphism of $S_f$ extends both an
inner automorphism of $B$ and an automorphism of $K$ by Remark \ref{re:6}.
\end{proof}

 \begin{corollary}\label{cor:II}
Let  $D$ be a division $p$-algebra of degree $m$
over $F$  which is split by a purely inseparable extension $K$ of exponent one such that $m<[K:F]$.
Then there is an irreducible polynomial $f\in K[t;\delta]$ of degree $m$ such that
   $S_f$ is a division algebra over $F$ of dimension $m p^e$  with right nucleus $D$, left and middle nucleus $K$,
   and nucleus $D\cap K$ an intermediate field of $K/F$, unequal to $K$.
\end{corollary}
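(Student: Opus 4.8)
The plan is to obtain Corollary \ref{cor:II} as a direct specialization of Theorem \ref{thm:mammoneII}, since the only extra hypothesis here is that $D$ is a \emph{division} $p$-algebra rather than an arbitrary $p$-algebra. First I would invoke Theorem \ref{thm:mammoneII}(i) with $B=D$: since $D$ is a $p$-algebra of degree $m$ over $F$ split by the purely inseparable extension $K$ of exponent one with $m<[K:F]=p^e$, there is an algebraic derivation $\delta$ on $K$ and a differential polynomial $f\in K[t;\delta]$ of degree $m$ with $F={\rm Const}(\delta)$ such that $S_f=K[t;\delta]/K[t;\delta]f$ is an $F$-algebra of dimension $mp^e$ whose right nucleus is $D$, whose left and middle nuclei are $K$, and whose nucleus ${\rm Nuc}(S_f)=D\cap K$ is an intermediate field of $K/F$ strictly smaller than $K$.

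Next I would use Theorem \ref{thm:mammoneII}(ii): because $D$ is a division algebra, that equivalence gives that $f$ is irreducible and that $S_f$ is a division algebra. This already yields every assertion in the corollary: $f$ is irreducible of degree $m$, $S_f$ is a division algebra over $F$, its dimension is $mp^e$ (being $m$ as a left $K$-module and $[K:F]=p^e$), its right nucleus is $D$, its left and middle nuclei are $K$, and its nucleus is $D\cap K$, an intermediate field of $K/F$ unequal to $K$. So the proof is essentially a one-line deduction, and I would simply write ``This is immediate from Theorem \ref{thm:mammoneII} applied with $B=D$.''

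There is essentially no obstacle here, as all the work has been front-loaded into Theorem \ref{thm:mammoneII} and, behind it, Theorem \ref{thm:lemma20'} (Amitsur's Lemma 20$'$), Theorem \ref{thm:structure}, and Proposition \ref{prop:important}. If one wanted to make the statement fully self-contained one could add a sentence recalling why $S_f$ being a division algebra follows: since $K/F$ is finite, Proposition \ref{prop:important} shows every element of $R=K[t;\delta]$ is bounded, so by Theorem \ref{thm:structure}(iv) (or Theorem \ref{thm:bounded}) irreducibility of $f$ is equivalent to $S_f$ being a division algebra, and irreducibility of $f$ is in turn equivalent to ${\rm Nuc}_r(S_f)=D$ having no zero divisors by \cite[Proposition 4]{G}. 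The mild subtlety worth flagging in passing is only that ``division algebra'' for the nonassociative $S_f$ must be read in the sense of bijective left and right multiplications (Section \ref{subsec:nonassalgs}), not in terms of a two-sided inverse; but this is exactly the sense established in Theorem \ref{thm:bounded} and Proposition \ref{prop:important}, so nothing further is needed.
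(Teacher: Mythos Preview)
Your proposal is correct and matches the paper's approach exactly: the paper treats the corollary as an immediate consequence of Theorem \ref{thm:mammoneII} applied with $B=D$, adding only the one-line remark that the irreducibility of $f$ follows from Proposition \ref{prop:important}. Your write-up is in fact more detailed than what the paper provides.
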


The fact that $f$ is irreducible  in Corollary \ref{cor:II} follows from Proposition \ref{prop:important}.
Note that every division $p$-algebra over $F$ split by $K$ has degree $m\leq [K:F]$, so that
Theorem \ref{thm:mammone} (iii) and Corollary \ref{cor:II} cover all possible cases for a division $p$-algebra.

We could ask for the algebra $S_f$ of smallest possible dimension which contains a given central simple algebra
 $B$ as a right nucleus.
This is equivalent to asking for a purely inseparable extension $K$ of exponent one splitting $B$ of smallest
 possible degree $[K:F]=p^e$ satisfying $m<[K:F]$, which in turn is connected to the question how many cyclic algebras
 are needed when saying that $B$ is similar to a product of cyclic algebras of degree $p$ in the Brauer group $Br(F)$.

 \begin{theorem}
Let $B$ be a  $p$-algebra  over $F$ of degree $m$, index $d=p^n$ and exponent $p$, such that
$m=r^2p^n< p^{d-1}$. Then there is
 a purely inseparable extension $K$ of exponent one with $[K:F]=p^{d-1}$,
and a differential polynomial $f\in K[t;\delta]$ of degree $m$ such that
 $$S_f=K[t;\delta]/K[t;\delta]f$$
 is an algebra over $F$ of dimension $mp^{d-1}$ with  right nucleus $B$
 and the properties listed in Theorem \ref{thm:mammoneII}.
\end{theorem}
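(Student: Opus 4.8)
The plan is to reduce the whole statement to Theorem~\ref{thm:mammoneII}. The only object that has to be constructed is a purely inseparable field extension $K/F$ of exponent one which splits $B$ and has $[K:F]=p^{d-1}$: once such a $K$ is at hand, the hypothesis $m=r^2p^n<p^{d-1}$ gives $m<[K:F]$, so Theorem~\ref{thm:mammoneII} applies to the pair $(B,K)$ and produces an algebraic derivation $\delta$ of $K$, of degree $p^{d-1}=[K:F]$, with $F={\rm Const}(\delta)$, together with a differential polynomial $f\in K[t;\delta]$ of degree $m$ such that $S_f=K[t;\delta]/K[t;\delta]f$ has $F$-dimension $m[K:F]=mp^{d-1}$, right nucleus $B$, left and middle nucleus $K$, nucleus the proper intermediate field $B\cap K$ of $K/F$, with $f$ irreducible if and only if $B$ is a division algebra if and only if $S_f$ is a division algebra, and with every $H\in{\rm Aut}_F(S_f)$ extending an inner automorphism of $B$ and an automorphism of $K$. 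These are exactly the assertions claimed, so it remains only to exhibit $K$.

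To build $K$, write $B\cong{\rm Mat}_{r^2}(D)$ with $D$ a central division $p$-algebra over $F$ of degree $d=p^n$; since $B$ and $D$ have the same splitting fields it suffices to split $D$. As $D$ has exponent $p$, a classical theorem on $p$-algebras shows $D$ is Brauer-equivalent to a tensor product $[a_1,b_1)\otimes_F\cdots\otimes_F[a_s,b_s)$ of cyclic $p$-algebras of degree $p$ with $a_i,b_i\in F$; the essential quantitative point --- this is where the exponent $d-1$ enters and it is the content behind Corollary~\ref{cor:last} --- is that one can arrange $s\le d-1$. Each symbol $[a_i,b_i)$ is split by $F(b_i^{1/p})$, so $D$, and hence $B$, is split by the purely inseparable exponent-one field $K_0=F(b_1^{1/p},\dots,b_s^{1/p})$, whose degree $p^{e_0}$ over $F$ satisfies $e_0\le s\le d-1$.

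If $e_0=d-1$ take $K=K_0$. Otherwise I would enlarge $K_0$ inside the purely inseparable exponent-one extension $F^{1/p}$ of $F$, which contains all the $b_i^{1/p}$, by choosing an intermediate field $K_0\subseteq K\subseteq F^{1/p}$ with $[K:F]=p^{d-1}$ exactly; this is possible as soon as $[F:F^p]\ge p^{d-1}$. A field containing a splitting field of $B$ again splits $B$, so $K$ splits $B$, is purely inseparable of exponent one, and $[K:F]=p^{d-1}>m$. Feeding $(B,K)$ into Theorem~\ref{thm:mammoneII} exactly as in the first paragraph completes the argument.

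I expect the genuine difficulty to be twofold. First, establishing the bound $s\le d-1$ on the number of degree-$p$ cyclic factors of $D$: this is what makes $p^{d-1}$ the right target degree and it is the arithmetic heart of the statement. Second, the enlargement step requires $F$ to carry a purely inseparable exponent-one extension of degree $p^{d-1}$, equivalently $[F:F^p]\ge p^{d-1}$; this must be read off from the hypothesis that $F$ admits a $p$-algebra of index $p^n$ and exponent $p$ (or else recorded as a standing assumption in this section). Everything else is a formal consequence of Theorem~\ref{thm:mammoneII} and the trivial remark that enlarging a splitting field preserves splitting.
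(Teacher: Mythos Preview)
Your proposal is correct and follows essentially the same route as the paper: both obtain the purely inseparable exponent-one splitting field $K$ with $[K:F]=p^{d-1}$ from Florence's symbol-length theorem \cite[Theorem~1.1]{F}, and then apply the machinery already in place (the paper invokes Theorem~\ref{thm:lemma20'} and Proposition~\ref{prop:important} directly, which is precisely what Theorem~\ref{thm:mammoneII} packages). The only difference is that the paper cites Florence as a black box yielding $K$ with $[K:F]=p^{d-1}$ outright, without unpacking the symbol decomposition or discussing your enlargement step and the attendant hypothesis $[F:F^p]\ge p^{d-1}$.
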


\begin{proof}
Let $B$ be a  $p$-algebra of index $p^n$ and exponent $p$. Then there is a purely inseparable field extension $K/F$
of exponent one with $K=F(u_1, \dots,u_{d-1})$, $u_i^p=a_i\in F$, and $[K:F]=p^{d-1}$, which splits $B$
\cite[Theorem 1.1.]{F}. We have $m=r^2d=r^2p^n$ for some $r\geq 1$.

We need $m=r^2p^n\leq [K:F]=p^{d-1}$ to be able to apply Theorem \ref{thm:lemma20'}.
By Theorem \ref{thm:lemma20'} this implies that
$B\cong{\rm Nuc}_r(S_f)$
 for some $f\in K[t;\delta]$ of degree $m$ and a suitable $\delta$ with
$F={\rm Const}(\delta)$. Since each $f\in K[t;\delta]$ is bounded by Proposition \ref{prop:important},
 $B={\rm Nuc}_r(S_f)$ is a division algebra if and only if
$f$ is irreducible \cite[Proposition 4]{G}, if and only if $S_f$ is a
division algebra.
\end{proof}

We obtain that for a division algebra $D$, the smallest possible dimension $l$ of a division algebra $S_f$
containing $D$ as right nucleus  satisfies $m^2< l=mp^{e}\leq mp^{m-1}$:

 \begin{corollary} \label{cor:last}
Let $D$ be a division $p$-algebra of degree $m$ and exponent $p$ over $F$. Then there is
 a purely inseparable extension $K$ of exponent one with $[K:F]=p^{m-1}$,
and an irreducible differential polynomial $f\in K[t;\delta]$ of degree $m$ such that
 $$S_f=K[t;\delta]/K[t;\delta]f$$
 is a division algebra over $F$ of dimension $mp^{m-1}$ with  right nucleus $D$
 and the properties listed in Theorem \ref{thm:mammoneII}.
\end{corollary}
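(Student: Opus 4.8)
The plan is to derive Corollary \ref{cor:last} as the special case $r=1$, $n=1$ of the preceding theorem, together with the size estimate mentioned in the sentence immediately before the statement. First I would observe that a division $p$-algebra $D$ of degree $m$ and exponent $p$ has index $d = m$ (since for a division algebra degree equals index), and $n=1$ (since index $p^n$ equals exponent $p$), so the hypothesis $m = r^2 p^n < p^{d-1}$ of the theorem becomes $m = r^2 p < p^{m-1}$ with $r = 1$, i.e. $m = p$; more to the point, for the divisibility to work out we simply set $r=1$ so that $m = p^n = d$, and the required inequality $m \le [K:F] = p^{m-1}$ holds as soon as $m < p^{m-1}$, which is automatic for $m = p^n \ge 2$ (indeed $p^{m-1} \ge 2^{m-1} > m$ for $m \ge 2$, and equality $m = p^{m-1}$ is impossible). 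Thus the theorem applies and yields a purely inseparable extension $K/F$ of exponent one with $[K:F] = p^{m-1}$ splitting $D$, and a differential polynomial $f \in K[t;\delta]$ of degree $m$ with $S_f = K[t;\delta]/K[t;\delta]f$ an $F$-algebra of dimension $m p^{m-1}$ having right nucleus $D$ and the properties listed in Theorem \ref{thm:mammoneII}.

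Next I would upgrade "differential polynomial" to "irreducible differential polynomial" and "algebra" to "division algebra". Since $[K:F] = p^{m-1}$ is finite, Proposition \ref{prop:important} tells us every $f \in K[t;\delta]$ is bounded, and by Theorem \ref{thm:structure} together with \cite[Proposition 4]{G} the polynomial $f$ is irreducible if and only if $D = {\rm Nuc}_r(S_f)$ is a division algebra, if and only if $S_f$ is a division algebra. By hypothesis $D$ is a division algebra, so $f$ is forced to be irreducible and $S_f$ is a division algebra. This also matches the remark after the theorem's proof that irreducibility of $f$ "follows from Proposition \ref{prop:important}". The dimension is $m p^{m-1}$ by construction, and since $m \ge 2$ we have $m p^{m-1} \ge m \cdot m = m^2$, with strict inequality because $p^{m-1} > m$ for $m = p^n \ge 2$; thus the smallest such dimension $l$ satisfies $m^2 < l = m p^e \le m p^{m-1}$ where $p^e = [K:F]$ is the degree of the chosen splitting field — the lower bound $m^2$ coming from the fact that $S_f$ strictly contains its $m^2$-dimensional right nucleus whenever $f$ is not two-sided, which is the case here since $K \not\subset {\rm Nuc}_r(S_f)$ by Theorem \ref{thm:mammoneII}(i).

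The main obstacle I anticipate is not any deep step but rather the bookkeeping around degrees and indices: one must be careful that "degree $m$, exponent $p$, division algebra" really does force $d = p^n = m$ with $n$ chosen so that $p^n = m$, i.e. $m$ is itself a prime power, which is indeed automatic for a division $p$-algebra of prime exponent (its degree equals its index, which is a power of $p$). One should also double-check that Faddeev's splitting-field result \cite[Theorem 1.1.]{F} invoked in the previous proof produces $K$ with exactly $[K:F] = p^{d-1} = p^{m-1}$ in this situation, and that the hypothesis $m < p^{m-1}$ needed to invoke Theorem \ref{thm:lemma20'} via $m \le [K:F]$ holds; this is elementary since $p^{m-1} \ge 2^{m-1}$ and $2^{m-1} > m$ for all $m \ge 2$ except one must note $2^{1} = 2 = m$ does not arise because $m \ge 2$ being a prime power with $p^{m-1}$ in the exponent forces $m - 1 \ge 1$ and the strict inequality holds for $m \ge 3$, while $m=2$ gives $p^{m-1} = p \ge 2 = m$ with equality only if $p = 2$, a borderline case that is still covered since Theorem \ref{thm:lemma20'} only requires $m \le [K:F]$, not strict inequality — though in that borderline case one lands in the scenario of Theorem \ref{thm:mammone} rather than Theorem \ref{thm:mammoneII}. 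To keep the corollary clean I would simply cite the previous theorem for the construction, invoke Proposition \ref{prop:important} and \cite[Proposition 4]{G} for irreducibility and the division property, and record the dimension count $m^2 < m p^{m-1}$.
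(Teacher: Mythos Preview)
Your overall strategy is the paper's: invoke Florence's result \cite[Theorem 1.1]{F} to produce a purely inseparable splitting field $K$ of exponent one with $[K:F]=p^{m-1}$, check the degree inequality needed for Theorem~\ref{thm:lemma20'}, and then use boundedness (Proposition~\ref{prop:important}) together with \cite[Proposition 4]{G} to force $f$ irreducible and $S_f$ a division algebra. So the route is right.

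The bookkeeping, however, contains a real slip. The claim ``$n=1$ (since index $p^n$ equals exponent $p$)'' is false: exponent $p$ does \emph{not} force the index to be $p$; for a division $p$-algebra one only knows $m=d=p^n$ with $n\geq 1$ arbitrary. You effectively correct this a line later, but then your attempted reduction to the preceding theorem runs into its \emph{strict} hypothesis $m<p^{d-1}$, which fails for $p=2$, $n=1$. The paper sidesteps this by arguing directly rather than through that theorem: writing $m=p^n$, the required inequality $m\leq p^{m-1}$ is equivalent to $n+1\leq p^n$, and this holds for every prime $p$ and every $n\geq 1$ (with equality precisely when $p=2$, $n=1$). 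Your own inequality ``$2^{m-1}>m$ for all $m\geq 2$'' is likewise off at $m=2$. You correctly flag that the equality case $m=[K:F]$ lands in Theorem~\ref{thm:mammone} rather than Theorem~\ref{thm:mammoneII}; the paper's proof simply records $m\leq p^{m-1}$ and does not separate out this borderline case.
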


\begin{proof}
There is a purely inseparable field extension
 $K=F(u_1, \dots,u_{m-1})$ of exponent one, $u_i^p=a_i\in F$, and $[K:F]=p^{m-1}$, which splits $D$
\cite[Theorem 1.1.]{F}.

We need $m=p^n\leq [K:F]=p^{m-1}$ to be able to apply Theorem \ref{thm:lemma20'}. This
holds for all prime $p$ and $n\geq 1$ as it is equivalent to
$n\leq p^n-1$, i.e. to $n+1\leq p^n$, which is true for all prime $p$ and $n\geq 1$.
Therefore there is a purely inseparable field extension $K/F$
of exponent one with $m\leq [K:F]=p^{m-1}$ which splits $D$. By Theorem \ref{thm:lemma20'} this implies that
$B\cong{\rm Nuc}_r(S_f)$
 for some $f\in K[t;\delta]$ of degree $m$ and a suitable $\delta$ with
$F={\rm Const}(\delta)$. Since $D$ is a division algebra and $f$ bounded, $f$ is irreducible and $S_f$ is a division algebra.
\end{proof}


\end{document}